\title[strongly bounded groups]{Strongly bounded groups of various cardinalities}
\theoremstyle{definition}\newtheorem{theorem}{Theorem}
\theoremstyle{definition}
\theoremstyle{definition}
\theoremstyle{definition}\newtheorem{bigtheorem}{Theorem}
\numberwithin{theorem}{section}
\theoremstyle{definition}\newtheorem{corollary}[theorem]{Corollary}
\theoremstyle{definition}\newtheorem{proposition}[theorem]{Proposition}
\theoremstyle{definition}\newtheorem{definition}[theorem]{Definition}
\theoremstyle{definition}
\theoremstyle{definition}
\theoremstyle{definition}
\theoremstyle{definition}
\theoremstyle{definition}\newtheorem{lemma}[theorem]{Lemma}
\theoremstyle{definition}
\theoremstyle{definition}
\theoremstyle{definition}
\theoremstyle{definition}
\theoremstyle{definition}\newtheorem{definitions}[theorem]{Definitions}
\newcommand{\R}{\mathcal{R}}
\newcommand{\G}{\mathcal{G}}
\newcommand{\CH}{\mathcal{CH}}
\newcommand{\cof}{\operatorname{cof}}
\begin{document}

\author{Samuel M. Corson}
\address{Instituto de Ciencias Matem\'aticas CSIC-UAM-UC3M-UCM, 28049 Madrid, Spain.}
\email{sammyc973@gmail.com}

\author{Saharon Shelah}
\address{Einstein Institute of Mathematics, The Hebrew University of Jerusalem, Jerusalem 91904 Israel}
\address{Department of Mathematics, Rutgers University, Piscataway, NJ 08854 USA}
\email{shelah@math.huji.ac.il}

\keywords{strongly bounded group, strong uncountable cofinality, Bergman property, isometric action, small cancellation over free product}
\subjclass[2010]{Primary 20A15, 20E15; Secondary 03E05, 03E17}
\thanks{The first author's work was supported by the European Research Council grant PCG-336983 and by the Severo Ochoa Programme for Centres of Excellence in R\&D SEV-20150554.}
\thanks{The second author's work was supported by the European Research Council grant 338821.  Paper number 1169 on Shelah's archive.  A new 2019 version of the second author's paper number 1098 will in some respect continue this paper on other problems and cardinals.}

\begin{abstract}  Strongly bounded groups are those groups for which every action by isometries on a metric space has orbits of finite diameter.  Many groups have been shown to have this property, and all the known infinite examples so far have cardinality at least $2^{\aleph_0}$.  We produce examples of strongly bounded groups of many cardinalities, including $\aleph_1$, answering a question of Yves de Cornulier \cite{dC}.  In fact, any infinite group embeds as a subgroup of a strongly bounded group which is, at most, two cardinalities larger.
\end{abstract}

\maketitle

\begin{section}{Introduction}

In geometric group theory one extracts information regarding groups via actions on metric spaces.  Little knowledge can be gleaned from a group action which has bounded orbits, and so one often uses non-geometric approaches for the study of, say, a finite group.  Interestingly there are infinite groups which are similarly not suited for study using geometric techniques.  A group $G$ is \emph{strongly bounded} if every action of $G$ by isometries on a metric space has bounded orbits \cite{dC} (this is sometimes referred to as \emph{the Bergman property}).  We emphasize that we are considering all abstract actions of $G$ on all metric spaces, regardless of any natural topology which $G$ may carry.  Examples of infinite strongly bounded groups were produced by the second author in \cite{Sh} using extra set theoretic assumptions, and more recently Bergman showed that the full symmetric group on a set is strongly bounded \cite{B}.  The group of self-homeomorphisms of the Cantor set and of the irrational numbers \cite{DG}, $\omega_1$-existentially closed groups, and arbitrary powers of a finite perfect group are also strongly bounded \cite{dC}.

All infinite strongly bounded groups are necessarily uncountable (see \cite[Remark 2.5]{dC}), and all known infinite examples so far have cardinality at least $2^{\aleph_0}$.  It is natural to ask whether there exists a strongly bounded group of cardinality $\aleph_1$ (see \cite[Question 4.16]{dC}).  We give an affirmative answer to this and many other such questions (see Section \ref{smallcancellation} for set theoretic definitions):

\begin{bigtheorem}\label{justfromZFC}  Let $\lambda$ be a cardinal of uncountable cofinality and $K$ be a group such that $|K| < \lambda$.  Then there exists a strongly bounded group $G \geq K$ which is of cardinality $\lambda$, except possibly when $\lambda = \mu^+$ where $\cof(\mu) = \omega$ and $\mu$ is a limit of weakly inaccessible cardinals.
\end{bigtheorem}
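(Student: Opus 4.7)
The plan is to build $G$ as the direct limit of a continuous increasing chain $(G_\alpha)_{\alpha<\lambda}$ of groups with $K\le G_0$ and each $|G_\alpha|<\lambda$, using small cancellation over free products at successor stages. The working characterization I would use (due to Bergman, reformulated in \cite{dC}) is that $G$ is strongly bounded iff it has \emph{strong uncountable cofinality}: (i) $G$ is not the union of any countable strictly ascending chain of proper subgroups, and (ii) for every symmetric generating set $S\subseteq G$ with $1\in S$, some $S^n=G$. So the task reduces to arranging (i) and (ii) for the direct limit.

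At each successor stage $\alpha\to\alpha+1$, I would form $G_\alpha * F$ for a suitable free group $F$ and pass to a quotient by the normal closure of a set of small-cancellation relators chosen to accomplish two things. First, for a pre-enumerated candidate symmetric subset $S_\alpha\subseteq G_\alpha$, impose relators that force every element of $G_\alpha$ to equal a word of uniformly bounded length in $S_\alpha$ together with finitely many new generators, so that in the final group $S_\alpha$ witnesses Cayley boundedness. Second, adjoin an element lying outside a pre-enumerated countable chain of proper subgroups of $G_\alpha$. The small-cancellation hypothesis, as developed in \cite{Sh}, guarantees that $G_\alpha\hookrightarrow G_{\alpha+1}$, that conjugacy and torsion are controlled, and that $|G_{\alpha+1}|<\lambda$, so the construction can be iterated. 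At limit stages I take unions; since $\cof(\lambda)>\omega$, every countable subset of $G$ lies in some $G_\alpha$, so any countable filtration of $G$ by proper subgroups is already visible at some stage and is killed at the next one, yielding (i).

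The principal obstacle is the set-theoretic bookkeeping required to anticipate, as some $S_\alpha$, every symmetric generating set of the final group $G$. The key point is a reflection argument: if $S\subseteq G$ generates $G$, then for $\alpha$ in a club there should be a stage at which $S\cap G_\alpha$ already generates $G_\alpha$, and we need this intersection to coincide with some previously chosen $S_\alpha$. When $\lambda$ is regular this is routine, since at each stage there are $<\lambda$ many candidates and $\lambda$ many stages. When $\lambda$ is singular of uncountable cofinality one must rely on club-guessing or diamond-like principles at the cardinals $(|G_\alpha|)_{\alpha<\lambda}$; such principles are available in ZFC except precisely in the excluded case $\lambda=\mu^+$ with $\cof(\mu)=\omega$ and $\mu$ a limit of weakly inaccessibles, where the guessing machinery can fail. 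The technical core of the proof will therefore consist of the small-cancellation lemmas validating the successor step together with the combinatorial set theory of Section \ref{smallcancellation} which drives the enumeration in every non-exceptional case.
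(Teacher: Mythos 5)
Your overall architecture (a continuous increasing chain of length $\lambda$, small cancellation over free products at successor stages, stabilization of a nondecreasing $\omega$-valued function using $\cof(\lambda)>\omega$) matches the paper's, but the bookkeeping step at the heart of your proposal has a genuine gap that the paper's proof is specifically engineered to avoid. You propose to pre-enumerate, along the $\lambda$ stages, candidate symmetric subsets $S_\alpha\subseteq G_\alpha$ (and candidate countable chains of subgroups), asserting that for regular $\lambda$ this is routine because at each stage there are $<\lambda$ many candidates. That count is wrong: the number of subsets of $G_\alpha$ is $2^{|G_\alpha|}$, which already for $\lambda=\aleph_1$ is $2^{\aleph_0}$ and may vastly exceed $\lambda$, so the candidates cannot be enumerated in $\lambda$ stages at all. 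Carrying out your scheme would require CH- or diamond-type guessing principles even at $\lambda=\aleph_1$, whereas the theorem is proved outright in ZFC. Relatedly, your diagnosis of the exceptional case as a failure of club-guessing is off target: the dividing line in the paper is the negative square-bracket partition relation $\lambda\nrightarrow[\lambda]^2_\lambda$, which holds for successors of regulars (Todor\v{c}evi\'c) and for successors of singulars that are not limits of weakly inaccessible cardinals (Shelah), and it is exactly this relation that is not available when $\lambda=\mu^+$ with $\cof(\mu)=\omega$ and $\mu$ a limit of weakly inaccessibles.

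The missing idea is to replace anticipation of generating sets by a single strong coloring, so that nothing is ever guessed. The paper fixes $f:[\lambda]^n\rightarrow\lambda$ witnessing the property $\oplus_{\lambda,n}$: for every $h:\lambda\rightarrow\omega$ there is $m$ such that every ordinal below $\lambda$ equals $f(Z)$ for some $n$-element $Z\subseteq\{\alpha : h(\alpha)<m\}$. Lemma \ref{Smallcans} provides one fixed word $w(x_0,\dots,x_{n-1},y)$ such that, at each successor stage, adjoining a single new element $c=\beta_\gamma$ forces $w(\overline{g},c)=f(\{g_0,\dots,g_{n-1}\})$ for all tuples from the current group; that is, the coloring $f$ itself is encoded into the multiplication uniformly, once and for all. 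Then, given any filtration $\{Z_m\}$ with $\G(Z_m)\subseteq Z_{m+1}$ and $\bigcup_m Z_m=G$, the function $h(\alpha)=\min\{m : \alpha\in Z_m\}$ yields a level $m$ at which every $g\in G$ is of the form $w(\overline{g},g_n)$ with all letters in $Z_m$ (using that the set of adjoined generators meets $Z_m$ unboundedly), whence $Z_{m+j}=G$ for $j$ the length of $w$; by Lemma \ref{alternative} this gives strong boundedness with no enumeration of subsets whatsoever. Your limit-stage remark also needs repair: for $\lambda$ a limit cardinal of uncountable cofinality the paper does not kill chains at the next stage but writes $\lambda=\bigcup_\alpha\lambda_\alpha^{++}$ with each $\lambda_\alpha^{++}$ strongly bounded by the successor case and stabilizes $\alpha\mapsto m_\alpha$; and the case $\lambda=\mu^+$ with $\mu$ singular of uncountable cofinality requires a separate gluing of colorings $f_\alpha$ on $[\mu_\alpha^{++}]^2$ via bijections $j_\gamma:\gamma\rightarrow\mu$, which your proposal does not address.
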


Thus, for example, there exist strongly bounded groups of cardinality $\aleph_1$, $\aleph_2$, $\aleph_{\omega +1}$, and $\aleph_{\aleph_1}$.  Moreover, if an infinite group is of cardinality $\kappa$ then it embeds as a subgroup of a strongly bounded group of cardinality $\kappa^{++}$, though often the strongly bounded group can be made to have cardinality $\kappa^+$ instead.  The proof utilizes small cancellation over free products.  One cannot drop the assumption regarding uncountable cofinality: a group which is countably infinite, or uncountable of cardinality which is $\omega$-cofinal, cannot be strongly bounded.  It is already known that any group $K$ embeds in a strongly bounded group of cardinality $|K|^{\aleph_0}$ (see \cite[Corollary 3.2]{dC}).

By assuming some extra set theory we can produce other examples of strongly bounded groups of cardinality $\aleph_1$ which seem slightly more tame.  In the next theorem, the hypothesis $\cof(LM) = \aleph_1$ is equivalent to the assertion that there exists an increasing sequence $\{X_{\alpha}\}_{\alpha<\aleph_1}$ of sets of Lebesgue measure zero such that any set of measure zero eventually includes in the elements of the sequence.

\begin{bigtheorem}\label{existence}  Suppose that $\cof(LM) = \aleph_1$ and that $H$ is a nontrivial finite perfect group.  Then there exists a strongly bounded group of cardinality $\aleph_1$ which is a subgroup of $\prod_{\omega} H$.
\end{bigtheorem}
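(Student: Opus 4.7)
The strategy is to realise $G$ as a subgroup of $H^\omega$ of cardinality $\aleph_1$ by a transfinite recursion of length $\omega_1$ against the cofinal sequence of null sets supplied by $\cof(LM) = \aleph_1$, and then deduce the Bergman property from the compact-group structure of $H^\omega$, whose strong boundedness is already known because $H$ is finite perfect. Fix the product of uniform probability measures $\mu$ on $H^\omega$ and an increasing cofinal family $(X_\alpha)_{\alpha<\omega_1}$ of Borel $\mu$-null sets. Build an increasing chain $(G_\alpha)_{\alpha<\omega_1}$ of countable subgroups of $H^\omega$: at stage $\alpha$ pick $g_\alpha \in H^\omega \setminus X_\alpha$ (possible since $X_\alpha$ is null) and let $G_{\alpha+1} := \langle G_\alpha, g_\alpha \rangle$; take unions at limits and set $G := \bigcup_{\alpha<\omega_1} G_\alpha$. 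Then $|G| = \aleph_1$, and the key invariant $G \not\subseteq X_\alpha$ holds for every $\alpha<\omega_1$; in particular $G$ is non-null.

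To verify strong boundedness, take any chain $S_0 \subseteq S_1 \subseteq \cdots$ of symmetric subsets of $G$ with $1 \in S_0$ and $\bigcup_n S_n = G$; I must produce $n,k$ with $G = S_n^k$. First, some $S_n$ is non-null in $H^\omega$: otherwise each $S_n \subseteq X_{\alpha_n}$ for some $\alpha_n$, and $\beta := \sup_n \alpha_n < \omega_1$ satisfies $G \subseteq \bigcup_n X_{\alpha_n} \subseteq X_\beta$ (using monotonicity of the cofinal family), contradicting $g_\beta \in G \setminus X_\beta$. From nonnullity of $S_n$ I would then derive, via a Steinhaus-style argument in the compact group $H^\omega$, that $S_n^2 = S_n S_n^{-1}$ contains an open (hence finite-index) subgroup $U \leq H^\omega$. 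Granting this, $G \cap U$ has finite index $d$ in $G$; a transversal $g_1,\ldots,g_d$ lies in some $S_m$, and every $g \in G$ factors as $g_i u$ with $u \in G \cap U \subseteq S_n^2$, so $G = S_N^3$ for $N := \max(n,m)$.

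The crux is the Steinhaus step, since $S_n$ need not be measurable and the classical theorem can fail for sets of positive outer but zero inner measure (a Vitali-style construction inside $H^\omega$ witnesses this failure). I expect to bypass this by strengthening the recursion: using $\cof(LM) = \aleph_1$ one enumerates in advance a list of potential \emph{threats}, each a pair consisting of a candidate symmetric cover of $G$ together with a countable system of null sets attempting to conceal coset structure, and at stage $\alpha$ one chooses $g_\alpha$ to defeat the $\alpha$-th threat. If the bookkeeping is arranged so that any symmetric cover of the completed $G$ is forced to contain some $S_n$ whose image in every finite-coordinate cylinder of $H^\omega$ covers a coset of a finite-index subgroup, this combinatorial regularity plays the role of measurability and produces the open subgroup $U$ needed above, whereupon the elementary coset argument of the previous paragraph closes the proof.
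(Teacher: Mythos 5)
There is a genuine gap, and it sits exactly where you flag it --- but the problem is worse than a delicate Steinhaus step, and your proposed repair cannot be implemented. First, the statement ``$S_n^2$ contains an open subgroup $U \leq H^{\omega}$'' is outright impossible in the interesting models: $\cof(LM) = \aleph_1$ is consistent with $2^{\aleph_0}$ arbitrarily large (this is the point of the paper's Corollary), and then $S_n^2 \subseteq G$ has cardinality at most $\aleph_1$ while every open subgroup of $H^{\omega}$ has cardinality $2^{\aleph_0}$; even the relative version $G \cap U \subseteq S_n^2$ cannot be extracted from positive outer measure, as you concede. Second, the bookkeeping you hope will substitute for measurability cannot be set up: a ``threat'' is an increasing countable cover $\{S_n\}$ of the \emph{completed} group $G$, an object that does not exist at any stage $\alpha < \omega_1$ of your recursion, and of which there are $2^{\aleph_1}$ many --- far too many to enumerate in a recursion of length $\omega_1$. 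Overcoming precisely this counting problem is the heart of the paper's proof: a bad cover (proper $\R$-filtration) of the final structure is \emph{reflected} to a proper filtration of a countable subalgebra (Lemma \ref{antichain} and the argument of Proposition \ref{stronguncountable}); filtrations of a fixed countable algebra are coded as elements of a countable product of countable sets; and $\cof(LM) = \aleph_1$ covers that product by $\aleph_1$ slalom sets $S^{\eta}$, so that a single antichain sequence can be built, by the fusion argument of Lemma \ref{ruiningsequences}, to defeat the entire slalom-compact family $J_{\theta_0, \theta_1, \eta}$ of filtrations simultaneously. Without some such reflection-plus-compactness device, your list of threats is both too long and available too late.

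Third, and decisively: your outline never uses the hypothesis that $H$ is perfect, yet the theorem is false without it. Take $H = \mathbb{Z}/2\mathbb{Z}$: any subgroup $G \leq \prod_{\omega} H$ of cardinality $\aleph_1$ is an $\mathbb{F}_2$-vector space of infinite dimension, so fixing a basis and letting $Z_m$ be the set of elements of support size at most $2^m$ gives $\G(Z_m) \subseteq Z_{m+1}$ and $\bigcup_m Z_m = G$ with no $Z_m = G$; by Lemma \ref{alternative}, $G$ is not strongly bounded. Hence no choice of generic recursion and no ``combinatorial regularity'' of covers (note that a dense subgroup of $(\mathbb{Z}/2\mathbb{Z})^{\omega}$ has full image in every finite cylinder, so your coset-covering condition is satisfiable by non-strongly-bounded groups) can deliver the endgame $G = S_N^3$: any argument blind to perfectness proves a falsehood. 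In the paper, perfectness enters through de Cornulier's Theorem \ref{deCornulier}, which uses commutator identities in the perfect group $H$ to convert strong uncountable cofinality of an algebra of sets on $\omega$ into strong boundedness of the group of measurable ($\mathcal{A}$-step) functions into $H$; the set-theoretic work is then entirely concentrated in building that algebra (Proposition \ref{stronguncountable}, refining Ciesielski--Pawlikowski). These two components --- the perfectness-based transfer and the reflection/slalom construction --- are precisely what your proposal is missing, and neither is a routine repair of your sketch.
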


Such groups will be constructed by producing a special type of Boolean algebra and applying a result of de Cornulier.  The group $\prod_{\omega} H$ mentioned in Theorem \ref{existence} is itself already known to be strongly bounded \cite[Theorem 4.1]{dC}.  Assuming that ZF is consistent, one can produce models of ZFC in which $\cof(LM) = \aleph_1$ and also $2^{\aleph_0}$ is any cardinal which is not ruled out by the classical theorems of set theory \cite{JuKoz}.  Thus we obtain:

\begin{corollary}  If $\kappa$ is a cardinal of uncountable cofinality then there exists a model of ZFC in which $2^{\aleph_0} = \kappa$ and there is a strongly bounded group of cardinality $\aleph_1$ which is a subgroup of $\prod_{\omega}H$, where $H$ is any nontrivial finite perfect group. (Assuming, of course, that ZF is consistent.)
\end{corollary}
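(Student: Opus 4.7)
The statement is essentially a direct combination of Theorem~\ref{existence} with the independence result of \cite{JuKoz}, so the plan is simply to invoke them in sequence. The classical restrictions on the value of $2^{\aleph_0}$ reduce, by K\"onig's theorem, to the requirement that $\cof(2^{\aleph_0}) > \omega$; hence every cardinal $\kappa$ of uncountable cofinality is a candidate value for the continuum. By the cited result of \cite{JuKoz} (flagged in the paragraph preceding the corollary), for any such $\kappa$ there exists a model of ZFC in which $\cof(LM) = \aleph_1$ and simultaneously $2^{\aleph_0} = \kappa$.

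First I would pass to such a model. Inside it the hypothesis $\cof(LM) = \aleph_1$ holds, so Theorem~\ref{existence} applies to any prescribed nontrivial finite perfect group $H$ and supplies a strongly bounded subgroup $G \leq \prod_{\omega} H$ with $|G| = \aleph_1$. Since $2^{\aleph_0} = \kappa$ also holds in this model by construction, both conclusions of the corollary are simultaneously witnessed.

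I do not foresee any genuine obstacle: the two deep inputs (Theorem~\ref{existence} and the consistency result of \cite{JuKoz}) already do all the work. The only item worth verifying explicitly is the claim that ``not ruled out by classical theorems of set theory'' amounts, for the continuum, to uncountable cofinality; this is the standard application of K\"onig's theorem ($\cof(2^{\aleph_0}) > \aleph_0$), and nothing more is needed to match the hypothesis of the corollary to the hypothesis of \cite{JuKoz}.
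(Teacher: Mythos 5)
Your proposal is correct and is essentially the paper's own argument: the paper obtains the corollary exactly by citing \cite{JuKoz} for models of ZFC in which $\cof(LM) = \aleph_1$ holds while $2^{\aleph_0}$ takes any value not excluded by classical set theory, and then applying Theorem \ref{existence} inside such a model. Your explicit verification that the only classical restriction on the continuum is K\"onig's theorem ($\cof(2^{\aleph_0}) > \aleph_0$), so that every $\kappa$ of uncountable cofinality is admissible, is the one detail the paper leaves implicit, and it is correct.
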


In Section \ref{smallcancellation} we prove Theorem \ref{justfromZFC} and in Section \ref{Boolean} we prove Theorem \ref{existence}.
\end{section}

\begin{section}{Proof of Theorem \ref{justfromZFC}}\label{smallcancellation}

In this section we will first quote an alternative characterization for a group to be strongly bounded and then review small cancellation over free products.  Then we review some set theory and furnish the proof of Theorem \ref{justfromZFC}.

 If $G$ is a group, $1_G$ denotes the identity element of $G$ and $Z\subseteq G$, we denote

\begin{center}  $\G(Z) = Z \cup \{1_G\} \cup \{g^{-1}\mid g\in Z\} \cup \{gh\mid g, h\in Z\}$.
\end{center}

\begin{lemma}\label{alternative}(\cite[Proposition 2.7]{dC}) A group $G$ is strongly bounded if and only if for every sequence $\{Z_m\}_{m\in \omega}$ of subsets of $G$ such that $\G(Z_m) \subseteq Z_{m+1}$ and $\bigcup_{m\in \omega} Z_m = G$ there exists an $m\in \omega$ for which $Z_m = G$.
\end{lemma}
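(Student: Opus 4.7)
The plan is to prove both implications. For the easy (backward) direction, I would assume the combinatorial stabilization property and fix an isometric action of $G$ on a metric space $(X,d)$ with basepoint $x_0$. Setting $Z_m = \{g \in G : d(gx_0,x_0) \le 2^m\}$, one checks that $\mathcal{G}(Z_m) \subseteq Z_{m+1}$: isometry gives $d(g^{-1}x_0,x_0) = d(x_0,gx_0) \le 2^m$ for inversion, and the triangle inequality yields $d(ghx_0,x_0) \le d(ghx_0,gx_0)+d(gx_0,x_0) \le 2 \cdot 2^m$ for products. Also $\bigcup_m Z_m = G$ since each $d(gx_0,x_0)$ is finite. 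The hypothesis then forces $Z_m = G$ for some $m$, so $\operatorname{diam}(Gx_0) \le 2^{m+1}$.

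For the forward (harder) direction, assume $G$ is strongly bounded and let $\{Z_m\}$ be any such sequence. The idea is to manufacture a left-invariant metric on $G$ directly from the filtration, apply strong boundedness to the left-regular isometric action, and then translate the resulting finite diameter back into the bound that some $Z_M = G$. Setting $\ell(g) = \min\{m : g \in Z_m\}$ (well-defined since $\bigcup_m Z_m = G$), one immediately has $\ell(g^{-1}) \le \ell(g)+1$ and $\ell(gh) \le \max(\ell(g),\ell(h))+1$. I would then define
\[
\|g\| \;=\; \inf\Bigl\{\sum_{i=1}^{n} 2^{\ell(g_i)} : g = g_1 \cdots g_n,\ n \ge 0\Bigr\},
\]
with the empty product giving $\|1_G\| = 0$. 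After symmetrising via $\|g\|_s = \|g\| + \|g^{-1}\|$ to absorb the asymmetry in $\ell$, one obtains a left-invariant metric $d(g,h) = \|g^{-1}h\|_s$. Left multiplication by $G$ is isometric for this metric, so strong boundedness forces the orbit of $1_G$---which is all of $G$---to have finite diameter $N$.

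The delicate step is translating the uniform bound $\|g\|_s \le N$ back into a uniform bound on $\ell(g)$. Any factorization realising $\|g\| \le N$ has at most $N$ factors (each contributes at least $1$) with each factor lying in $Z_m$ for some $m \le \log_2 N$. Iterating the rule $\mathcal{G}(Z_m) \subseteq Z_{m+1}$ shows that any product of $2^k$ elements of $Z_m$ lies in $Z_{m+k}$, so $g$ itself lies in $Z_M$ for a single $M = M(N)$ depending only on $N$. Hence $Z_M = G$, as required. I expect this final combinatorial bookkeeping---choosing the exponential weight $2^{\ell(g_i)}$ so that a bounded norm simultaneously controls both the number of factors and their individual $Z$-levels---to be the main conceptual step; everything else is natural once one commits to building a length function from the filtration.
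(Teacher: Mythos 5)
Your proposal is correct, though strictly speaking the paper offers no proof of this lemma to compare against: it simply quotes it from de Cornulier (\cite[Proposition 2.7]{dC}), so your argument is a self-contained reconstruction. Both directions check out. The backward direction ($Z_m = \{g : d(gx_0,x_0)\le 2^m\}$) is exactly the standard argument. In the forward direction you take a slightly heavier route than necessary: from $\ell(gh)\le\max(\ell(g),\ell(h))+1\le \ell(g)+\ell(h)+1$ and $\ell(g^{-1})\le\ell(g)+1$ one can see directly that $g\mapsto \ell(g)+\ell(g^{-1})+2$ for $g\neq 1_G$ (and $0$ at $1_G$) is already a symmetric subadditive length function, so its boundedness under the left-regular isometric action bounds $\ell$ outright and immediately yields $Z_N=G$ --- no word-norm infimum and no back-translation are needed. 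Your word-norm $\|g\|=\inf\{\sum_i 2^{\ell(g_i)}\}$ works too, and the back-translation is sound, with two small points worth making explicit: the infimum is attained because the achievable sums are positive integers, so a bound $\|g\|\le N$ really does give a factorization with at most $N$ factors each in $Z_{\lfloor\log_2 N\rfloor}$; and your claim that a product of $2^k$ elements of $Z_m$ lies in $Z_{m+k}$ requires padding a product of arbitrary length $n\le 2^k$ by copies of $1_G$, which lies in $Z_{m+1}$ rather than necessarily in $Z_m$ (one also needs $Z_m\subseteq Z_{m+1}$, which follows since $Z_m\subseteq\mathcal{G}(Z_m)\subseteq Z_{m+1}$) --- trivial fixes, but they belong in a complete write-up. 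What your heavier construction buys is a genuine metric produced in one uniform step; what the shifted-length shortcut buys is the elimination of the entire ``delicate step'' you flag as the conceptual heart of your argument.
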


Now for the review of free products (see \cite[V.9]{LS}).  Recall that elements of a free product $F = *_{i\in I} H_i$ are naturally viewed as words whose letters are nontrivial elements of $\bigcup_{i\in I} H_i$.  We will write $w\equiv u$ to say that two such words are equal as words, letter for letter, and write $w = u$ if the group element given by the product $w$ is equal in $F$ to the group element given by the product $u$.  Concatenation of words $w$ and $u$ will be denoted as usual by $wu$, meaning that one writes the word $w$ and then to the right of this one writes the word $u$.  

Each element $g$ of $F$ has a unique writing as such a word $g = w \equiv g_1\cdots g_k$ which is of minimal length (the \emph{normal form}) in which no two consecutive letters in the word are elements of the same $H_i$, and we let $L(g) = k$ denote the length of such an expression.  Given two normal forms $w \equiv g_1\cdots g_k$ and $u \equiv h_1\cdots h_j$ one computes the normal form of the group element $wu$ in the following way.  First we find $s\in \omega$ which is maximal such that $g_{k + 1 -r} = h_r^{-1}$ for all $1\leq r\leq s$ (we allow $s$ to be $0$).  In case $k-s\geq 1$ and $s+1\leq j$ we get $g_{k-s}\neq h_{s+1}^{-1}$.  If $g_{k-s}$ is in the same $H_i$ as $h_{s+1}$ then we let $g_{k-s}h_{s+1} = h\in H_i$ and obtain the normal form $g_1\cdots g_{k - s - 1} h h_{s+2}\cdots h_j$ for the group element $wu$.  Otherwise we get $g_1\cdots g_{k - s} h_{s + 1} \cdots h_j$ as the normal form.  We say a group element $w \in F$ has \emph{semi-reduced form} $uv$ if both $u$ and $v$ are normal forms, $w = uv$, and the number $s$ used in the computation for the normal form for $uv$ is $0$.

An element in $F$ with normal form $w \equiv g_1\cdots g_k$ is \emph{cyclically reduced} if either $L(w) \leq 1$, or $g_1$ and $g_k$ are in different $H_i$.  More generally we say $w$ is \emph{weakly cyclically reduced} if either $L(w) \leq 1$, or $g_1 \neq g_k^{-1}$.  A subset $R\subseteq F$ is \emph{symmetrized} if every $w\in R$ is weakly cyclically reduced and every weakly cyclically reduced conjugate of $w$ and of $w^{-1}$ is also in $R$.  From a set $\Gamma$ of weakly cyclically reduced elements of $F$ one obtains a symmetrized set by taking all weakly cyclically reduced conjugates of $\Gamma$ and then taking their inverses.  Given a symmetrized set $R$, a word $u$ is a \emph{piece} if there exist distinct $w_1, w_2\in R$ with semi-reduced forms $w_1 = uv_1$ and $w_2 = uv_2$.

\begin{definition}  A symmetrized set $R$ for the free product $F = *_{i\in I} H_i$ satisfies the \emph{$C'(\eta)$ condition}, where $\eta >0$,  if for each $w\in R$ we have

\begin{enumerate}

\item $L(w) >\frac{1}{\eta}$; and

\item whenever $w = uv$ is a semi-reduced form, with $u$ a piece, we have $L(u)<\eta L(w)$.

\end{enumerate}

\end{definition}

We use the following:

\begin{lemma}\label{LyndonandSchupp}(see \cite[Corollary V.9.4]{LS})  Let $F = *_{i\in I} H_i$ be a free product and $R$ a symmetrized subset of $F$ which satisfies $C'(\frac{1}{6})$.  Let $N$ be the normal closure of $R$ in $F$.  Then the natural map $F \rightarrow F/N$ embeds each factor $H_i$ of $F$.
\end{lemma}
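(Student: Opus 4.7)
The plan is to argue by contradiction: suppose some $h\in H_i$ with $h\neq 1_F$ lies in $N$, and derive a contradiction from a Greendlinger-type property of the $C'(1/6)$ symmetrized set $R$. The key tool I would invoke (or prove) is the following: for such $R$, every nontrivial $w\in N$ admits, after replacing it by a weakly cyclically reduced conjugate, a semi-reduced factorization $w = asb$ in which $s$ also appears in a semi-reduced factorization $r = sc$ of some $r\in R$ with $L(s) > \tfrac{1}{2} L(r)$. Granting this, apply it to $h$: since $h\in H_i$ satisfies $L(h)\leq 1$, any subword $s$ coming from a semi-reduced factorization of $h$ has $L(s)\leq 1$; but condition (1) of $C'(1/6)$ gives $L(r) > 6$, hence $L(s) > 3$, a contradiction. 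Note also that $h$ is already weakly cyclically reduced, so conjugation is not an issue here.

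To establish the Greendlinger-type property, I would use reduced $R$-diagrams (van Kampen diagrams) over the free product $F$. A nontrivial $w\in N$ is the boundary label of some reduced diagram $M$ whose two-cells are labeled by elements of $R$; an edge shared by two distinct two-cells carries a label that is a piece in the sense defined above, so condition (2) bounds the total interior-edge length of each face by $\eta L(r)$ times the number of pieces on its boundary. A standard combinatorial curvature argument (the $(6,3)$ Euler-characteristic estimate that is available precisely when $\eta \leq \tfrac{1}{6}$) then forces the existence of at least one face of $M$ whose boundary meets $\partial M$ in an arc whose length exceeds half of the face's total boundary. Reading off the label of that arc produces the desired long subword of a cyclic conjugate of $w$.

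The main obstacle is the diagram argument — setting up reduced $R$-diagrams over a free product (rather than over a free group), verifying that shared edges really correspond to pieces, and pushing through the Euler-characteristic bookkeeping that converts $C'(1/6)$ into the existence of a face with large exterior boundary. The group-theoretic reduction is then immediate: the lemma's conclusion that each $H_i$ embeds in $F/N$ is precisely the statement $H_i \cap N = \{1_F\}$, which the length comparison $3 < L(s) \leq 1$ rules out.
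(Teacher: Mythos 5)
Your outline is correct and coincides with the standard proof of the result the paper does not prove but simply cites, namely Lyndon--Schupp \cite[Corollary V.9.4]{LS}: the Greendlinger-type theorem for free products (their Theorem V.9.3, established by exactly the reduced-diagram curvature argument you sketch) gives, for $C'(\frac{1}{6})$, a subword $s$ of some $r\in R$ with $L(s) > \frac{1}{2}L(r) > 3$ inside some weakly cyclically reduced conjugate of any nontrivial $w\in N$, and since every weakly cyclically reduced conjugate of a nontrivial element of $H_i$ again lies in a factor and has length at most $1$ (your parenthetical remark glosses this, but it is a one-line computation), the contradiction you draw is valid. As the paper offers no proof of its own, there is nothing further to compare: your route is the one in the cited source, with the main labor correctly identified as the diagram machinery over free products rather than the group-theoretic reduction.
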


\begin{lemma}\label{Smallcans}  For each $n \geq 1$ there is a group word $w(x_0, x_1, \ldots, x_{n-1}, y)$ such that the following holds: If $G$ is a group and $f:  (G \setminus \{1_G\})^n \rightarrow G$ then there exist group $H$ and $c\in H$ such that

\begin{enumerate}[(a)]

\item $G \leq H$;

\item $c\in H\setminus G$;

\item for all $\overline{g}\in (G\setminus \{1_G\})^n$ we have $w(\overline{g}, c) = f(\overline{g})$;

\item $H = \langle G \cup \{c\}\rangle$.
\end{enumerate}

\end{lemma}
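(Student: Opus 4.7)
The idea is to apply small cancellation over a free product. Form $F = G * \langle c\rangle$ with $\langle c\rangle$ infinite cyclic, and fix once and for all a word
\[
w(x_0,\ldots,x_{n-1},y) \equiv y^{k_1} x_{i_1} y^{k_2} x_{i_2} \cdots y^{k_T} x_{i_T} y^{k_{T+1}},
\]
where $T$ is a sufficiently large multiple of $n$, the exponents $k_1,\ldots,k_{T+1}$ are distinct positive integers, and the indices $i_1,\ldots,i_T$ run through $\{0,\ldots,n-1\}$ (with repetition allowed, so that every $x_i$ occurs). Given $(G,f)$, for each $\bar g=(g_0,\ldots,g_{n-1})\in (G\setminus\{1_G\})^n$ set
\[
r(\bar g)\;=\;w(\bar g,c)\cdot f(\bar g)^{-1}\;\in\; F.
\]
Each $r(\bar g)$ is weakly cyclically reduced of length at least $2T+1$ in $F$, since its first letter $c^{k_1}\in \langle c\rangle$ and last letter (either $c^{k_{T+1}}$ or $f(\bar g)^{-1}\in G$) lie in different factors. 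Let $R$ denote the symmetrized set obtained from $\{r(\bar g)^{\pm 1}\}$ by taking all weakly cyclically reduced conjugates.

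\textbf{Main obstacle.} The crux is to verify that $R$ satisfies $C'(1/6)$. Consider a piece $u$ arising from semi-reduced decompositions $\rho_1=uv_1$ and $\rho_2=uv_2$ of distinct elements of $R$, where $\rho_i$ is a weakly cyclically reduced conjugate of $r(\bar g_i)^{\epsilon_i}$. The key leverage is the distinctness of $k_1,\ldots,k_{T+1}$: any letter of $u$ lying in $\langle c\rangle$ appears at a uniquely determined position of $w(\bar g_i,c)$, pinning down the cyclic alignment of $\rho_1$ and $\rho_2$ against $w$ up to a bounded ambiguity caused by the single $G$-letter $f(\bar g_i)^{-1}$. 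In the aligned configuration, $u$ can extend only as far as the two words agree: if $\bar g_1\neq \bar g_2$, it must end at the first position where the tuples differ, giving $O(n)$ letters; and if $\bar g_1=\bar g_2$, the piece reduces to a self-overlap of $r(\bar g)^{\pm 1}$ with a nontrivial cyclic shift of itself, which can contain no $\langle c\rangle$-letter at all (again by distinctness of the $k_j$) and so is bounded by a constant. In both cases the piece has length $O(n)$ while $L(\rho_i)\geq 2T+1$, so choosing $T$ large enough (on the order of $8n$ suffices) forces every piece ratio below $1/6$. The combinatorial verification, accounting for all semi-reduced decompositions, cyclic shifts, inversions, and the boundary effects around the $f(\bar g)^{-1}$ letter, is the main technical obstacle.

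\textbf{Wrap-up.} Let $N$ be the normal closure of $R$ in $F$ and set $H=F/N$, with $c\in H$ denoting the image of the generator. Lemma \ref{LyndonandSchupp} then embeds $G$ into $H$, which gives (a) after identifying $G$ with its image; (c) is immediate from $r(\bar g)\in N$; and (d) is clear since $F$, hence $H$, is generated by $G\cup\{c\}$. For (b), suppose toward contradiction that the image of $c$ lies in $G\leq H$, so $cg^{-1}\in N$ for some $g\in G$. Then $cg^{-1}$ has length at most $2$ in $F$; however Greendlinger's lemma (a standard consequence of $C'(1/6)$) forces every nontrivial element of $N$ to contain as a subword more than half of some relator $r\in R$, and since $L(r)\geq 2T+1>4$ that subword must have length greater than $2$, a contradiction. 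Hence $c\in H\setminus G$, proving (b).
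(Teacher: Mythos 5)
Your proposal is correct and follows essentially the same route as the paper: relators identifying $w(\overline{g},c)$ with $f(\overline{g})$ in $F = G * \langle c\rangle$, a $C'(\frac{1}{6})$ verification in which $c$-exponents pin the alignment of overlapping relators so that pieces have length $O(n)$, and then the factor-embedding theorem for sixth-groups over free products (Lemma \ref{LyndonandSchupp}). The differences are minor: the paper takes descending powers $y^{32}, y^{31}, \ldots, y$ separating repeated blocks $x_0yx_1y\cdots yx_{n-1}$ rather than distinct exponents throughout (your version even avoids the paper's special-case repair when $g_{n-1} = f(\overline{g})$, since your word ends in a $y$-power), and your explicit Greendlinger argument for item (b) spells out a step the paper dismisses as immediate.
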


\begin{proof}  Let $u(x_0, x_1, \ldots, x_{n-1}, y)$ be given by

$$x_0yx_1yx_2y\cdots x_{n - 2}yx_{n - 1}$$

\noindent and let $w(x_0, \ldots, x_{n - 1}, y)$ be given by 

$$y^kuy^{k - 1}uy^{k - 2}u\cdots y^3uy^2uyu$$

\noindent where $k = 32$.  Let $F$ be the free product given by  $F = \langle c \rangle * G$, where $c$ has infinite order.  Let $\Gamma_0 = \{(f(\overline{g}))^{-1}w(\overline{g}, c)\mid \overline{g}\in (G\setminus \{1_G\})^n\}$.  Notice that the elements of $\Gamma_0$ are weakly cyclically reduced unless $g_{n-1} = f(\overline{g})$, in which case we replace the word $(f(\overline{g}))^{-1}w(\overline{g}, c)$ with the weakly cyclically reduced word obtained by reducing the word $w(\overline{g}, c)(f(\overline{g}))^{-1}$.  By performing all these replacements we obtain a new set $\Gamma$.

Notice that the symmetrization $R$ of $\Gamma$ satisfies $C'(\frac{1}{6})$ over the free product $F$.  More specifically each element of $\Gamma$ is weakly cyclically reduced and of length $(2n-1)k + k + 1 = 2nk + 1$ in case $f(\overline{g}) \neq 1_G, g_{n-1}$; of length $2nk + 1 - 2 = 2nk - 1$ in case $g_{n-1} = f(\overline{g})$; or of length $2nk$ in case $f(\overline{g}) = 1_G$.  Weakly cyclically reduced conjugates of elements of $\Gamma$ will have length at least $2nk - 2$, similarly for the inverses of such elements.  It is clear that no normal form which has form 

\begin{center}
$$v_1c^{m_1}(u(\overline{g}, c))^{\pm 1}c^{m_2}(u(\overline{g}, c))^{\pm 1}c^{m_3}v_3$$
\end{center}

\noindent where $v_1, v_3\in F$ and $m_1, m_2, m_3 \in \mathbb{Z} \setminus \{0\}$ can be a piece.  Thus we can use, for example, $10n$ as a very na\"ive upper bound on the length of a piece.  For any $w\in R$ we have

\begin{center}
$$L(w) \geq 2nk - 2  = 64n - 2 > 6$$
\end{center}

\noindent as well as

\begin{center}
$$ 10n < \frac{1}{6}(64n-2) = \frac{1}{6}(2nk - 2)\leq \frac{1}{6}L(w)$$
\end{center}

\noindent and so $R$ indeed satisfies $C'(\frac{1}{6})$.

Let $N$ be the normal subgroup in $F$ generated by $R$ and by Lemma \ref{LyndonandSchupp} that the homomorphism $F \rightarrow F/N = H$ embeds each of $G$ and $\langle c\rangle$.  The claim is immediate.

\end{proof}

As is usual, we shall consider each ordinal number to be the set of ordinal numbers below itself (e.g. $0 = \emptyset$, $1 = \{0\}$, $\omega + 1 = \{0, 1, \ldots, \omega\}$) and the cardinal numbers to be the ordinals which cannot inject to a proper initial subinterval of themselves.  The notation $|Y|$ denotes the cardinality of the set $Y$.  A subset $X$ of ordinal $\alpha$ is \emph{bounded} if there is an upper bound $\beta<\alpha$ for all elements of $X$.  The \emph{cofinality} of an ordinal $\alpha$ (denoted $\cof(\alpha)$) is the least cardinality of an unbounded subset of $\alpha$.  An infinite cardinal $\lambda$ is \emph{regular} if $\cof(\lambda) = \lambda$, and is \emph{singular} otherwise.  We use $\kappa^+$ to denote the smallest cardinal which is strictly greater than $\kappa$, and similarly $\kappa^{++} = (\kappa^+)^+$.  An infinite cardinal $\lambda$ is a \emph{successor cardinal} if $\lambda = \kappa^+$ for some cardinal $\kappa$, and is a \emph{limit cardinal} otherwise.  An uncountable cardinal which is a limit regular cardinal is \emph{weakly inaccessible}.  For any infinite cardinal $\kappa$ the successor cardinal $\kappa^+$ is regular.

Next we remind the reader of some notation from Ramsey coloring theory.

\begin{definitions}If $X$ is a set and $n\in \omega$ we let $[X]^{n}$ denote the set of subsets of $X$ of cardinality $n$.  If $\kappa$, $\lambda$, and $\mu$ are cardinals and $n\in \omega$ then we write

\begin{center}  $\lambda \rightarrow [\mu]^n_{\kappa}$

\end{center}

\noindent to mean than if $f: [\lambda]^n \rightarrow \kappa$ is any function then for some $A\subseteq \lambda$ with $|A| = \mu$ we have $f([A]^n)$ is a proper subset of $\kappa$ (see \cite{EHR}).  The negation of this relation is denoted $\lambda \nrightarrow [\mu]^n_{\kappa}$.  The reader should take care not to confuse this square bracket partition relation with the parenthetical notation $\lambda \rightarrow (\mu)^n_{\kappa}$.
\end{definitions}

\begin{proof}[Proof of Theorem \ref{justfromZFC}]  The relation $\oplus_{\lambda, n}$, where $\lambda$ is an infinite cardinal and $n\in \omega$, will mean that there exists some $f: [\lambda]^n \rightarrow \lambda$ such that if $h:\lambda \rightarrow \omega$ is any function then for some $m\in \omega$ we have 

\begin{center}
$\lambda = \{f(Z)\mid Z \subseteq \{\alpha<\lambda\mid h(\alpha)<m\}\text{ and }|Z| = n\}$.
\end{center}

Clearly $\lambda \nrightarrow [\lambda]^n_{\lambda}$ implies $\oplus_{\lambda, n}$.  We note that if $\lambda$ is a successor of a regular cardinal, or if $\lambda = \mu^+$ where $\mu$ is singular and not a limit of weakly inaccessible cardinals, then $\lambda \nrightarrow [\lambda]_{\lambda}^2$, and therefore $\oplus_{\lambda, 2}$, holds (see \cite[Theorems 3.1, 3.3(3)]{Sh2} and \cite{Tod}).  We consider three cases.  

\textbf{Case I: $\oplus_{\lambda, n}$ holds, $\cof(\lambda)>\omega$.}  In this case we let $K$ be a group, without loss of generality infinite, with $|K|<\lambda$.  The construction is by induction.  First we define an increasing sequence of ordinals $\{\beta_{\alpha}\}_{\alpha<\lambda}$ by letting $\beta_0 = |K|$, $\beta_{\alpha + 1} = \beta_{\alpha} + \beta_{\alpha}$ and $\beta_{\alpha} = \bigcup_{\gamma<\alpha}\beta_{\gamma}$ when $\alpha$ is a limit ordinal.

Next we let $f: [\lambda\setminus\{0\}]^n \rightarrow \lambda\setminus \{0\}$ witness $\oplus_{\lambda, n}$.  We can without loss of generality assume that $f(W) \in \beta_{\alpha}$ for all $\alpha<\lambda$ and $W \in[\beta_{\alpha}]^n$.  To see this, let $U$ be a set such that $|U| = \lambda$ and by assumption let $g: [U]^n \rightarrow U$ be such that for any $h: U \rightarrow \omega$ there exists $m\in \omega$ for which

\begin{center}
$U = \{g(W)\mid W \subseteq \{x\in U \mid h(x)<m\}\text{ and }|W| = n\}$.
\end{center}

\noindent Pick a well order $U = \{x_{\epsilon}\}_{\epsilon<\lambda}$.  Given a subset $W \subseteq U$ we let $g''(W) = \bigcup_{k\in\omega} W_k$ where $W_0 = W$ and $W_{k+1} = W_k \cup \{g(W)\mid W\subseteq W_k \text{ and }|W| = n\}$.  Let $U_0'\subseteq U$ be such that $|U_0'| = |K|$.  Let $U_0 = g''(U_0')$.  If $\alpha<\lambda$ is a limit ordinal we let $U_{\alpha} = \bigcup_{\gamma<\alpha}U_{\gamma}$.  If $\alpha = \gamma + 1$ then we pick $U \supseteq U_{\alpha}' \supseteq U_{\gamma}$ such that the minimal element of $U \setminus U_{\gamma}$ is in $U_{\alpha}'$ and $|U_{\alpha}'| = |U_{\alpha}' \setminus U_{\gamma}| = |U_{\gamma}|$.  Let $U_{\alpha} = g''(U_{\alpha}')$.  Notice that $g([U_{\alpha}]) \subseteq U_{\alpha}$ for each $\alpha < \lambda$.  By the induction we also have $U = \bigcup_{\alpha<\lambda} U_{\lambda}$.  Taking $p:U \rightarrow \lambda\setminus \{0\}$ to be any bijection such that $p(U_{\alpha}) = \beta_{\alpha}$ for all $\alpha$ and defining $f = p \circ g\circ p^{-1}$ we obtain the required $f$.

We define the group $G$ to have set of elements $\lambda$ and give it a group structure as an increasing union of subgroups $G_{\alpha}$, with $G_{\alpha}$ having $\beta_{\alpha}$ as its underlying set of elements.  Define $G_0$ to have the group structure of $K$ on the set of elements $\beta_0$ with $0$ identified with the trivial group element $1_K$.  If we have defined the group structure $G_{\gamma}$ for all $\gamma<\alpha \leq \lambda$ and $\alpha$ is a limit ordinal then we let $G_{\alpha}$ have the unique group structure imposed by the $G_{\gamma}$ with $\gamma<\alpha$.  If $\lambda >\alpha = \gamma + 1$ then by Lemma \ref{Smallcans} we define $G_{\alpha}$ to have group structure such that

\begin{enumerate}[(a)]

\item $G_{\gamma} \leq G_{\alpha}$;

\item for all $\overline{g}\in (G_{\gamma}\setminus \{1_{G_{\gamma}}\})^n$ such that $g_0<g_1< \cdots< g_{n-1}$ we have $w(\overline{g}, \beta_{\gamma}) = f(\{g_0, \ldots, g_{n-1}\})$;

\item $G_{\alpha} = \langle G_{\gamma} \cup \{\beta_{\gamma}\}\rangle$
\end{enumerate}

\noindent (here we use the fact that $|\beta_{\alpha}| = |\beta_{\alpha}\setminus \beta_{\gamma}| = |\beta_{\gamma}|$).

Now $G = G_{\lambda}$ and we let $X = \{\beta_{\alpha}\}_{\alpha<\lambda}$.  Suppose that $\{Z_m\}_{m\in \omega}$ is a sequence of subsets of $G$ such that $G = \bigcup_{m\in \omega} Z_m$ and $Z_{m + 1} \supseteq \G(Z_m)$.  Select $m\in \omega$ large enough that

\begin{center}
$\lambda \setminus \{0\}= \{f(W)\mid W \subseteq \{0 \neq \alpha<\lambda\mid \alpha \in Z_m\}\text{ and }|W| = n\}$
\end{center}

\noindent and that $1_G \in Z_m$ and that $X \cap Z_m$ is unbounded in $\lambda$.  Given arbitrary $g\in G\setminus \{1_G\}$ we select nontrivial $g_0< \ldots< g_{n-1}$ in $Z_m$ such that $f(\{g_0, \ldots, g_{n-1}\}) = g$.  Pick $g_n\in X \cap Z_m$ which is larger than all $g_0, \ldots,  g_{n-1}$.  Then we have $w(g_0, \ldots, g_{n-1}, g_n) = f(\{g_0, \ldots, g_{n-1}\}) = g$ and so $G = Z_{m + j}$ where $j$ is the length of the word $w$.  Case I is proved.

\textbf{Case II: $\lambda$ is a limit cardinal, $\cof(\lambda)>\omega$.}  In this case we let $\lambda = \bigcup_{\alpha < \cof(\lambda)} \lambda_{\alpha}$ where $\{\lambda_{\alpha}\}_{\alpha<\cof(\lambda)}$ is a strictly increasing sequence of cardinals below $\lambda$ such that $\lambda_0 \geq |K|$.  Notice that each cardinal $\lambda_{\alpha}^{++}$ satisfies Case I.  Let $\lambda_0$ be given any group structure such that $K$ is a subgroup in $\lambda_0$.  By Case I we let $\lambda_0^{++}$ be given a group structure $G_0$ which is strongly bounded.  For each $\alpha<\cof(\lambda)$ we endow $\lambda_{\alpha}^{++}$ with a group structure $G_{\alpha}$ which extends the group structure on $\bigcup_{\gamma<\alpha}\lambda_{\gamma}^{++}$ and such that $G_{\alpha}$ is strongly bounded (by Case I).  Now let $\lambda$ be given the group structure $G$ inherited from all the $G_{\alpha}$.  Let $\{Z_m\}_{m\in \omega}$ be such that $\G(Z_m) \subseteq Z_{m+1}$ and $\bigcup_{m\in \omega}Z_m = G$ and notice that for each $\alpha<\cof(\lambda)$ there exists some minimal $m_{\alpha}\in \omega$ such that $Y_{m_{\alpha}}\cap G_{\alpha} = G_{\alpha}$.  Then $\alpha\mapsto m_{\alpha}$ is a nondecreasing sequence from $\cof(\lambda)$ to $\omega$, so it eventually stabilizes, and so $G$ is strongly bounded.

\textbf{Case III: $\lambda = \mu^+$ where $\cof(\mu) >\omega$ and $\mu$ is singular.}  Let $K$ be a group of cardinality $<\lambda$.  We let $\mu = \bigcup_{\alpha<\cof(\mu)} \mu_{\alpha}$ with $\{\mu_{\alpha}\}_{\alpha<\cof(\mu)}$ being a strictly increasing sequence of cardinals.  We have $\mu_{\alpha}^{++} \nrightarrow [\mu_{\alpha}^{++}]^2_{\mu_{\alpha}^{++}}$.  Let $f_{\alpha}: [\mu_{\alpha}^{++}]^2 \rightarrow \mu_{\alpha}^{++}$ witness this.  Let $f: [\mu]^2 \rightarrow \mu$ be defined by $f(W) = f_{\alpha}(W)$ where $\alpha<\cof(\mu)$ is minimal such that $W\in [\mu_{\alpha}^{++}]^2$.

For each ordinal $\mu\leq\gamma <\mu^+ = \lambda$ we let $j_{\gamma}: \gamma \rightarrow \mu$ be any bijection and define $h_{\gamma}: [\gamma]^2 \rightarrow \gamma$ by $j_{\gamma}^{-1}\circ f\circ j_{\gamma}$ (here $j_{\gamma}(W)$, where $W\in [\gamma]^2$, means the $2$-element set obtained by applying $j_{\gamma}$ to the elements of $W$).  Define $h: [\lambda]^{3} \rightarrow \lambda$ by $h_{\max(W)}(W \setminus \{\max(W)\})$.

We define a group structure on $\lambda$ by induction.  Let $\beta_0 = \mu$, $\beta_{\delta + 1} = \beta_{\delta} + \beta_{\delta}$ and $\beta_{\delta} = \bigcup_{\epsilon<\delta} \beta_{\epsilon}$ when $\delta$ is a limit ordinal.  Let $G_0$ be any group structure on $\beta_0$ which includes $K$ as a subgroup.  If $G_{\delta}$ has been defined for all $\epsilon<\delta\leq \lambda$ and $\delta$ is a limit ordinal then we let $G_{\delta}$ be the induced group structure on $\beta_{\delta}$.  If $\lambda > \delta = \epsilon + 1$ then we let $G_{\delta}$ be the group given by

\begin{enumerate}[(a)]

\item $G_{\epsilon} \leq G_{\delta}$;

\item for all $\overline{g}\in (G_{\epsilon}\setminus \{1_{G_{\epsilon}}\})^2$ with $j_{\beta_{\epsilon}}(g_0)< j_{\beta_{\epsilon}}(g_1)$ we have $w(\overline{g}, \beta_{\epsilon}) = h(\{g_0, g_{1}, \beta_{\epsilon}\})$;

\item $G_{\delta} = \langle G_{\epsilon} \cup \{\beta_{\epsilon}\}\rangle$

\end{enumerate}

\noindent where $w$ is as in Lemma \ref{Smallcans}.  Now we have our group structure $G$ on $\lambda$.  Let $\{Z_m\}_{m\in \omega}$ be as usual.

We claim that for each $\delta<\lambda$ there exists some $m\in \omega$ such that $G_{\delta} \subseteq Z_m$.   Fix $\delta< \lambda$ and select $m_0\in \omega$ large enough that $\beta_{\delta}\in Z_{m_0}$.  Notice that for each $\alpha < \cof(\mu)$ there is some natural number $m > m_0$ for which $|\mu_{\alpha}^{++} \cap j_{\beta_{\delta}}(Z_m \cap \beta_{\delta})| = \mu_{\alpha}^{++}$ (since $\mu_{\alpha}^{++}$ is necessarily of cofinality $>\omega$).  As $\cof(\mu)>\omega$ there must exist some $m_1 > m_0$ for which 

\begin{center}

$\{\alpha < \cof(\mu)\mid |\mu_{\alpha}^{++} \cap j_{\beta_{\delta}}(Z_{m_1} \cap \beta_{\delta})| = \mu_{\alpha}^{++}\}$

\end{center}

\noindent is unbounded in $\cof(\mu)$.  Let $g\in G_{\delta}$ be given.  Select $\alpha<\cof(\mu)$ large enough that $j_{\beta_{\delta}}(g)\in \mu_{\alpha}^{++}$ and such that $|\mu_{\alpha}^{++} \cap j_{\beta_{\delta}}(Z_{m_1} \cap \beta_{\delta})| = \mu_{\alpha}^{++}$.  Select elements $\mu_{\alpha}^+ < \zeta_0 < \zeta_1 < \mu_{\alpha}^{++}$ which are elements of $j_{\beta_{\delta}}((Z_{m_1}\setminus \{1_G\}) \cap \beta_{\delta})$ for which $f_{\alpha}(\{\zeta_0, \zeta_1\}) = j_{\beta_{\delta}}(g)$.  Then $f(\{\zeta_0, \zeta_1\}) = j_{\beta_{\delta}}(g)$, and by construction it follows that $$w(j_{\beta_{\delta}}^{-1}(\zeta_0),  j_{\beta_{\delta}}^{-1}(\zeta_1), \beta_{\delta}) = g$$ and so $G_{\delta} \subseteq Z_{m_1 + j}$ where $j$ is the length of the word $w$.

Now letting $m_{\delta}$ be minimal such that $G_{\delta} \subseteq Z_{m_{\delta}}$ we get a nondecreasing function $\delta \mapsto m_{\delta}$ from $\lambda$ to $\omega$, which must stabilize.  Thus $G$ is strongly bounded.
\end{proof}

\end{section}

\begin{section}{Proof of Theorem \ref{existence}}\label{Boolean}

We assume that the reader is familiar with the definition of a Boolean algebra (see \cite[I. 7]{J}).  We shall use notation $x\wedge y$ and $x \vee y$ for the meet and join of $x$ and $y$ in a Boolean algebra, $x^c$ for the complement of $x$, $x-y = x\wedge y^c$, and $1$ and $0$ for the top and bottom elements.  Given a subset $Z$ of a Boolean algebra $\mathcal{A}$ we let $\R(Z)$ equal the following set: $$Z \cup \{0, 1\} \cup \{x^c\mid x\in Z\}\cup\{x\vee y\mid x, y\in Z\} \cup \{x\wedge y\mid x, y\in Z\}\cup \{x - y\mid x, y\in Z\}.$$

\begin{definition}A \emph{proper $\R$-filtration} of a Boolean algebra $\mathcal{A}$ is a sequence $\{Z_n\}_{n\in \omega}$ such that $Z_n$ properly includes in $Z_{n+1}$, $\R(Z_n) \subseteq Z_{n+1}$, and $\mathcal{A} = \bigcup_{n\in \omega}Z_n$.  A proper $\R$-filtration induces a function $f: \mathcal{A} \rightarrow \omega$ by letting $f(x) = \min\{n\in \omega\mid x\in Z_n\}$.
\end{definition}

\begin{definition}(see \cite[Remark 4.5]{dC})  An infinite Boolean algebra has \emph{strong uncountable cofinality} if it has no proper $\R$-filtration.
\end{definition}

We shall be especially interested in a specific type of algebra.

\begin{definition}  An \emph{algebra on a set} $X$ is a collection $\mathcal{A}$ of subsets of $X$ for which 

\begin{itemize}
\item $X\in \mathcal{A}$;

\item $Z, Z' \in \mathcal{A}$ implies $Z\cap Z'\in \mathcal{A}$; and

\item $Z\in \mathcal{A}$ implies $X \setminus Z \in \mathcal{A}$.
\end{itemize}

\noindent  Intersection, union and set theoretic complementation answer for the meet, join and complementation which endow $\mathcal{A}$ with a natural Boolean algebra structure.
\end{definition}

Given an algebra $\mathcal{A}$ on $X$ and a function $f: X \rightarrow Y$ we shall say that $f$ is \emph{measurable} if each preimage $f^{-1}(y)$ is in $\mathcal{A}$ for each $y\in Y$.  If $Y$ is a finite group then it is easy to check that the set of measurable functions from $X$ to $Y$ forms a group under component wise multiplication: $(f_0*f_1)(x) = f_0(x)f_1(x)$.

The following was essentially proved by Yves de Cornulier in \cite{dC}:

\begin{theorem}\label{deCornulier}  Suppose $\mathcal{A}$ is an algebra of sets on a set $X$ which is of strong uncountable cofinality and $H$ is a finite perfect group.  Then the group of measurable functions from $X$ to $H$ is strongly bounded.
\end{theorem}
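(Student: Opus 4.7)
My plan is to use Lemma~\ref{alternative}: fix a sequence $\{Y_m\}_{m\in\omega}$ of subsets of the group $G$ of measurable functions $X \to H$ with $\G(Y_m) \subseteq Y_{m+1}$ and $\bigcup_m Y_m = G$, and produce some $m$ with $Y_m = G$. The workhorse will be the collection of characteristic functions: for $A \in \mathcal{A}$ and $h \in H$, let $\chi_A^h \in G$ denote the function with value $h$ on $A$ and value $1_H$ on $X \setminus A$ (this lies in $G$ because $\mathcal{A}$ is an algebra). A pointwise computation yields the identities
$$(\chi_A^h)^{-1} = \chi_A^{h^{-1}}, \qquad \chi_A^{h_1}\chi_A^{h_2} = \chi_A^{h_1 h_2}, \qquad \chi_{X \setminus A}^h = \chi_A^{h^{-1}} \cdot \chi_X^h, \qquad [\chi_A^a, \chi_B^b] = \chi_{A \cap B}^{[a,b]}.$$
Moreover any $g \in G$ decomposes as a product of at most $|H|-1$ characteristic functions via $g = \prod_{h \in H \setminus \{1_H\}} \chi_{g^{-1}(h)}^h$. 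Thus it will suffice to place every $\chi_A^h$ into a common $Y_m$.

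To effect this, I will transfer the filtration of $G$ to a filtration of $\mathcal{A}$ and then invoke its strong uncountable cofinality. Define
$$W_m = \{A \in \mathcal{A} : \chi_A^h \in Y_m \text{ for every } h \in H\}.$$
Exhaustion $\bigcup_m W_m = \mathcal{A}$ is immediate from the finiteness of $H$ together with $\bigcup_m Y_m = G$. The heart of the argument is to verify $\R(W_m) \subseteq W_{m+c}$ for some constant $c$ depending only on $H$ and all $m$ above a threshold $m_0$ (chosen large enough that $Y_{m_0}$ contains $1_G$ and every constant function $\chi_X^h$, of which there are only $|H|$). Closure under complement is supplied by the identity displayed for $\chi_{X \setminus A}^h$; intersection is the decisive case, and this is where perfectness of $H$ enters. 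Since $H$ is finite and perfect, every $h \in H$ is a product $\prod_{i=1}^{k} [a_i, b_i]$ of at most $k = k(H)$ commutators, so
$$\chi_{A \cap B}^h = \prod_{i=1}^{k} [\chi_A^{a_i}, \chi_B^{b_i}],$$
which lies in $Y_{m+c}$ whenever $A, B \in W_m$. Union and set difference then follow from De Morgan.

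After reindexing to absorb the constants (say $W'_n := W_{m_0 + cn}$), the resulting sequence is an $\R$-chain in $\mathcal{A}$ with union $\mathcal{A}$, so strong uncountable cofinality of $\mathcal{A}$ forces $W'_n = \mathcal{A}$ for some $n$. Consequently every $\chi_A^h$ lies in this single $Y_m$, and the product decomposition of arbitrary $g \in G$ as at most $|H|-1$ such characteristic functions then yields $Y_{m + |H|} = G$, finishing the proof. The main obstacle is the intersection step: a naive product $\chi_A^a \cdot \chi_B^b$ is not supported on $A \cap B$, and only the commutator identity together with perfectness of $H$ allows one to realize $\chi_{A \cap B}^h$ as a bounded-length word in generators already controlled by $Y_m$; the rest of the argument is bookkeeping of the additive constants $m_0$ and $c$.
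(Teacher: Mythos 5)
Your proposal is correct, and it is essentially the argument the paper relies on: the paper offers no proof of its own here, simply citing de Cornulier's proof of \cite[Thm. 4.1]{dC}, and your write-up is a faithful reconstruction of that argument (decomposition of a measurable function into at most $|H|-1$ characteristic functions, the commutator identity $[\chi_A^a,\chi_B^b]=\chi_{A\cap B}^{[a,b]}$ with bounded commutator width from perfectness of $H$, and transfer of the filtration $\{Y_m\}$ of the group to an $\R$-chain $\{W_m\}$ in $\mathcal{A}$, killed by strong uncountable cofinality). The only point you gloss over is that the chain $\{W'_n\}$ need not be \emph{properly} increasing as the definition of a proper $\R$-filtration demands, but this is routine: if it stabilizes, the stable value must be $\mathcal{A}$ since the union is $\mathcal{A}$, and otherwise a strictly increasing subsequence still satisfies $\R(W'_{n_j})\subseteq W'_{n_{j+1}}$ and yields the contradiction.
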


\begin{proof}  See proof of \cite[Thm. 4.1]{dC}.
\end{proof}

Thus to prove Theorem \ref{existence} it suffices to prove the following:

\begin{proposition}\label{stronguncountable}  If $\cof(LM) = \aleph_1$ then there exists an algebra of sets on $\omega$ of cardinality $\aleph_1$ which is of strong uncountable cofinality.
\end{proposition}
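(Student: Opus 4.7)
I will construct $\mathcal{A}$ as a Boolean subalgebra of $\mathcal{P}(\omega) \simeq 2^{\omega}$ of size $\aleph_{1}$ with the \emph{Sierpinski property}: $|\mathcal{A} \cap N| \leq \aleph_{0}$ for every Lebesgue null Borel $N \subseteq 2^{\omega}$. Strong uncountable cofinality will then be extracted from this property.

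The construction proceeds by transfinite recursion on $\omega_{1}$, using an $\subseteq$-increasing cofinal $\omega_{1}$-sequence $\{N_{\alpha}:\alpha<\omega_{1}\}$ of null Borel subsets of $2^{\omega}$ provided by $\cof(LM)=\aleph_{1}$. Set $\mathcal{A} = \bigcup_{\alpha<\omega_{1}}\mathcal{A}_{\alpha}$ with $\mathcal{A}_{\alpha}=\langle x_{\beta}:\beta<\alpha\rangle\subseteq\mathcal{P}(\omega)$. At stage $\alpha$ I select $x_{\alpha}\in 2^{\omega}$ sufficiently generic that $x_{\alpha}\notin N_{\alpha}$ and, more crucially, that no Boolean combination $t(x_{\alpha},\overline{y})$ with $\overline{y}$ from $\mathcal{A}_{\alpha}$ and $t$ a Boolean term nontrivially depending on $u$ falls into $N_{\beta}$ for any $\beta\leq\alpha$; this is arranged by requiring $x_{\alpha}$ to avoid a countable union of null preimages (after enlarging individual $N_{\beta}$ within the cofinal chain if a particular preimage is non-null). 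The Sierpinski property of $\mathcal{A}$ is then immediate: given null Borel $N$, pick $\alpha$ with $N\subseteq N_{\alpha}$; the genericity condition prevents any element of $\mathcal{A}_{\gamma+1}\setminus \mathcal{A}_{\gamma}$ from lying in $N$ for $\gamma\geq\alpha$, so $\mathcal{A}\cap N\subseteq \mathcal{A}_{\alpha}$ is countable.

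For strong uncountable cofinality, let $\{Z_{m}\}_{m\in\omega}$ be a sequence of subsets of $\mathcal{A}$ with $\R(Z_{m})\subseteq Z_{m+1}$ and $\bigcup_{m}Z_{m}=\mathcal{A}$, and assume for contradiction that $Z_{m}\subsetneq\mathcal{A}$ for every $m$. The key step is to show that each $Z_{m}$, viewed as a subset of $2^{\omega}$, is contained in a null Borel set. Granted this, the Sierpinski property forces $|Z_{m}|\leq\aleph_{0}$, whence $\mathcal{A}=\bigcup_{m}Z_{m}$ is countable, contradicting $|\mathcal{A}|=\aleph_{1}$.

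\textbf{Main obstacle.} The genuine difficulty lies in the null-containment claim for the $Z_{m}$ and, inseparably, in arranging the recursion above so that the Boolean combinations of $x_{\alpha}$ with earlier elements can indeed be kept out of the $N_{\beta}$'s, since for terms such as $u\mapsto u\wedge y$ the preimage of a null set need not be null in the product measure. My intended resolution is twofold. First, I would restrict the cofinal chain $\{N_{\alpha}\}$ to null sets of a tame form (for instance, countable unions of clopens of rapidly decreasing measure, closed under suitable projections) so that the relevant preimages remain null. Second, to deduce the null containment of $Z_{m}$, examine the topological closure $\overline{Z_{m}}\subseteq 2^{\omega}$: were $\overline{Z_{m}}$ of positive measure, a Steinhaus-type argument in the compact abelian group $(2^{\omega},\triangle)$ combined with the successive $\R$-closures would propel $\overline{Z_{m+k}}$ to absorb cofinally many $x_{\alpha}$, and then the genericity forces $Z_{m+k+\ell}=\mathcal{A}$ for some finite $\ell$, contradicting properness. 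Should the topological route prove too fragile (Steinhaus alone does not in finitely many steps give $\overline{Z_{m+k}}=2^{\omega}$), the fallback is to weave a diagonalization against all countably-coded candidate proper $\R$-filtrations into the recursion itself, selecting $x_{\alpha}$ at each stage to additionally defeat each such candidate. In every variant the crux is reconciling the measure-theoretic genericity of the $x_{\alpha}$ with the algebraic growth along a proper $\R$-filtration.
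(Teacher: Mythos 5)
There is a fatal obstruction at the very first step of your plan: a Boolean subalgebra of $\mathcal{P}(\omega)$ of cardinality $\aleph_1$ with the Sierpinski property cannot exist, no matter how the recursion is arranged. Any uncountable algebra of sets on $\omega$ contains an infinite, coinfinite element $y$ (otherwise it embeds in the countable finite--cofinite algebra). The set $N_y=\{z\in 2^{\omega}\mid z\subseteq y\}$ is closed and Lebesgue null, having measure $\prod_{n\notin y}\frac{1}{2}=0$, and since $\mathcal{A}$ is closed under intersection we have $\{u\cap y\mid u\in\mathcal{A}\}\subseteq \mathcal{A}\cap N_y$. The Sierpinski property would therefore force both $\{u\cap y\mid u\in\mathcal{A}\}$ and $\{u\cap(\omega\setminus y)\mid u\in\mathcal{A}\}$ to be countable; but $u\mapsto (u\cap y,\ u\cap(\omega\setminus y))$ is injective, so $\mathcal{A}$ would be countable, a contradiction. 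This is exactly the phenomenon you flagged yourself (preimages of null sets under $u\mapsto u\wedge y$ need not be null), but it cannot be repaired by restricting the cofinal chain to ``tame'' null sets: the single set $N_y$ must be confronted for each $y$ already placed in the algebra, and it alone kills the construction. Measure-theoretic genericity of elements is simply incompatible with closure under Boolean operations, which is why the paper's proof never goes near it.

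Even granting some substitute for the Sierpinski property, the second half of your argument is, by your own admission, missing: you give no proof that the levels $Z_m$ of a proper $\R$-filtration are covered by null sets, and the Steinhaus sketch only controls the closures $\overline{Z_m}$, which carry no information about $Z_m$ as a subset of $\mathcal{A}$ (any dense subset of a dense subalgebra of $2^\omega$ has full closure, perfectly compatibly with $Z_m\subsetneq\mathcal{A}$). Your fallback of diagonalizing against ``countably-coded'' candidate filtrations also cannot close the gap: a proper $\R$-filtration of the final algebra restricts to an arbitrary function on each countable subalgebra, there are up to $2^{\aleph_0}$ (possibly $>\aleph_1$) such candidates per subalgebra, and they need not be coded in any way, so a length-$\omega_1$ recursion cannot meet them one at a time. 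This counting problem is precisely what the paper's proof (following Ciesielski--Pawlikowski) is engineered to solve: the formulation $\cof(LM)=\min\{|\mathcal{F}|\mid \mathcal{F}\subseteq\CH,\ \bigcup\mathcal{F}=\omega^{\omega}\}$ is used in Lemmas \ref{dominatewell} and \ref{ruiningsequences} to sort the continuum-many potential filtrations of a countable subalgebra into only $\aleph_1$-many classes $J_{\theta_0,\theta_1,\eta}$, each of which is defeated uniformly by a single antichain $\{a_n^{\zeta}\}$ with $f(a_n^{\zeta})>n$; the transfinite construction then adjoins the combinations $(\overline{a})^m$ and $(\overline{a})^{\theta}$ built from the dominating functions $g_{\theta}$, and any proper filtration of the limit algebra produces the numerical contradiction $g_{\theta}'(m)<f(a_{g_{\theta}'(m)})\leq\max(\{m,f((\overline{a})^m)\})+1<g_{\theta}(m)<g_{\theta}'(m)$. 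Nothing in your proposal substitutes for this mechanism, and the measure theory in the hypothesis enters the correct proof only through the slalom covering, not through nullity of any set of reals associated to the algebra.
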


This is a slight refinement of the main result of \cite{CP} in which Cielsielski and Pawlikowski construct from the assumption $\cof(LM) = \aleph_1$ an algebra of cardinality $\aleph_1$ which is of uncountable cofinality (i.e. an algebra that is not the union of a strictly increasing $\omega$ sequence of subalgebras).  Models of ZFC + $\aleph_1< 2^{\aleph_0}$ in which such an algebra exists were first constructed by Just and Koszmider \cite{JuKoz}.  Under Martin's axiom the existence of an algebra of cardinality $\aleph_1$ of uncountable cofinality implies the continuum hypothesis \cite[Prop. 5]{Kop}.  Thus one cannot hope to prove the conclusion of Proposition \ref{stronguncountable} without extra set theoretic assumptions.

The proof of Proposition \ref{stronguncountable} will follow a slight modification to the lovely proof used in \cite{CP}.  Given a set $X$ we let $[X]^{\leq n}$ denote the set of all subsets of $X$ of cardinality at most $n$.  Consistent with \cite{CP} we let $\CH$ denote the collection of all subsets $T \subseteq \omega^\omega$ of form $T = \prod_{n\in \omega}T_n$ where $T_n \in [\omega]^{\leq n+1}$.  The cardinal $\cof(LM)$ is equal to the cardinal $$\min(\{|\mathcal{F}|  \text{ }\mid \mathcal{F} \subseteq \CH \text{ and }\bigcup\mathcal{F} = \omega^{\omega}\})$$ (see \cite{BaJud}), and for our construction we will use this latter formulation.

\begin{lemma}\label{dominatewell}  For each $T\in \CH$ these exists a strictly increasing $g\in \omega^{\omega}$ such that for every strictly increasing $f\in T$ we have $f(n) < g(n)$ for all $n\in \omega$, and whenever $g(n) \geq f(m)$ we have $g(n+1) > f(m+2)$.
\end{lemma}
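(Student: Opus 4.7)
The plan is to build $g$ by recursion on $n$, making $g(n+1)$ large enough at each stage by exploiting the fact that each layer $T_k$ of $T = \prod_k T_k$ is finite (of size at most $k+1$), together with the elementary observation that any strictly increasing $f \in \omega^\omega$ satisfies $f(m) \geq m$ for every $m \in \omega$.

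The key point is this: if $f \in T$ is strictly increasing and $g(n) \geq f(m)$, then $m \leq f(m) \leq g(n)$, so $m+2 \leq g(n)+2$, and hence $f(m+2) \in T_{m+2} \subseteq \bigcup_{k \leq g(n)+2} T_k$. This last union is a finite set of natural numbers (being a finite union of finite sets), so it has a well-defined maximum. I would therefore set $g(0) = \max(T_0) + 1$ and, recursively, let $g(n+1)$ be any natural number strictly greater than both $g(n)$ and $\max \bigcup_{k \leq g(n)+2} T_k$. This makes $g$ strictly increasing by construction.

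To verify conclusion (ii) of the lemma, the observation above gives immediately $f(m+2) \leq \max \bigcup_{k \leq g(n)+2} T_k < g(n+1)$. To verify (i), note that $f(n) \in T_n$; since $g$ is strictly increasing with $g(0) \geq 1$ we have $g(n-1) \geq n$ for $n \geq 1$, so $T_n$ appears in the union used to bound $g(n)$ from below, giving $f(n) < g(n)$. The base case $f(0) < g(0)$ holds by the choice of $g(0)$.

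There is no substantial obstacle here; the entire content of the lemma comes down to the trivial inequality $f(m) \geq m$ for strictly increasing $f \in \omega^\omega$, which converts a bound on the value $f(m)$ into a bound on the index $m$, after which the finiteness of each layer $T_k$ allows the recursion to proceed.
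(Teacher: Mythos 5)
Your proof is correct and takes essentially the same route as the paper: the same recursion defining $g(n+1)$ from $g(n)$, anchored by the same key observation that $f(m)\geq m$ for strictly increasing $f$, which converts a bound on the value $f(m)$ into a bound on the index $m$. The only immaterial difference is that the paper uses monotonicity of $f$ to bound $f(m+2)\leq f(g(n)+2)\leq \max(T_{g(n)+2})$, so its recursion needs only the single layer $T_{g(n)+2}$, whereas you take the maximum over the finite union $\bigcup_{k\leq g(n)+2}T_k$.
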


\begin{proof}  Let $g(0) = \max(T_0) +1$ and generally let $g(n +1) = \max(T_{g(n) +2} \cup \{g(n)\}) + 1$.  Clearly $g(n +1) \geq g(n) +1$ for all $n\in \omega$ and so $g$ is strictly increasing and moreover $g(n) > n$.  Given a strictly increasing $f\in T$ we notice that $f(0) < g(0)$ since $f(0) \in T_0$ and $f(n+1)< f(g(n) +2)< g(n+1)$ for all $n$.  Finally, suppose that 
$m, n\in \omega$ are such that $g(n) \geq f(m)$.  Then $g(n) \geq f(m) \geq m$ and so $g(n) + 2 \geq m+2$.  Now

\begin{center}  $f(m+2) \leq f(g(n) +2) \leq \max(T_{g(n) + 2}) < g(n+1)$
\end{center}

\noindent and we are finished.
\end{proof}

The argument for the next lemma follows that of \cite[Proposition 4.4]{dC}.

\begin{lemma}\label{antichain}  If $f: \mathcal{A} \rightarrow \omega$ corresponds to a proper $\R$-filtration of Boolean algebra $\mathcal{A}$ then there exists a sequence $\{a_n\}_{n\in \omega}$ for which $a_n \wedge a_m = 0$ whenever $m\neq n$ and such that $f(a_0) < f(a_1) < \cdots$.
\end{lemma}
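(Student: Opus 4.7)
My plan is to measure each $x \in \mathcal{A}$ by $F(x) := \sup\{f(a) : a \in \mathcal{A},\ a \leq x\} \in \omega \cup \{\infty\}$ and to pass to the quotient modulo the ideal of elements with $F < \infty$. The key observation is that a single join raises the $f$-level by at most one: any $a \leq x \vee y$ decomposes as $(a \wedge x) \vee (a \wedge y)$, so $F(x \vee y) \leq \max(F(x), F(y)) + 1$. Setting $I = \{x \in \mathcal{A} : F(x) < \infty\}$, we obtain a proper ideal, since $F(1) = \infty$ because $Z_n \subsetneq Z_{n+1}$ together with $\bigcup_n Z_n = \mathcal{A}$ forces $f$ to be unbounded on $\mathcal{A}$. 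Thus $\mathcal{A}/I$ is a nontrivial Boolean algebra, and I would split the argument according to whether it is finite or infinite.

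If $\mathcal{A}/I$ is infinite, it contains an infinite antichain of nonzero classes, which lifts to $b_0, b_1, \ldots \in \mathcal{A} \setminus I$ pairwise disjoint modulo $I$. Disjointifying $c_n := b_n - \bigvee_{m<n} b_m$ keeps $c_n \notin I$, because $b_n = c_n \vee (b_n \wedge \bigvee_{m<n} b_m)$ exhibits $b_n$ as a join of $c_n$ with an element of $I$, so $F(c_n) = \infty$ and I can inductively pick $a_n \leq c_n$ with $f(a_n) > f(a_{n-1})$. If instead $\mathcal{A}/I$ is finite, its atoms lift to pairwise disjoint $e_1, \ldots, e_k \in \mathcal{A}$ with $1 - \bigvee_j e_j \in I$, and subadditivity of $F$ then forces $F(e_1) = \infty$ for at least one such atom.

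The heart of the argument is the analysis inside such an atom $e_1$. Being an atom mod $I$, every $y \leq e_1$ lies in $I$ or has $e_1 - y \in I$. I would show that $\{F(y) : y \leq e_1,\ y \in I\}$ must be unbounded: otherwise a uniform bound $B$ on this set would bound $f(a) \leq B$ for $a \in I \cap \downarrow e_1$ and, by writing $a = e_1 - (e_1 - a)$ and using the constant-step cost of complements and meets in the $\R$-filtration, also bound $f(a)$ for $a \leq e_1$ with $a \notin I$, contradicting $F(e_1) = \infty$. With unboundedness in hand, I would choose $y_0, y_1, \ldots \leq e_1$ in $I$ with $F(y_{n+1}) > F(\bigvee_{m \leq n} y_m) + n + 1$, disjointify to $c_n := y_n - \bigvee_{m<n} y_m$, deduce $F(c_n) \to \infty$ from the subadditive lower bound $F(c_n) \geq F(y_n) - F(\bigvee_{m<n} y_m) - 1$, and finally pick $a_n \leq c_n$ with $f(a_n) > f(a_{n-1})$. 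The most delicate step will be this atom-mod-$I$ analysis: one must leverage the atom property of the quotient together with the tight step-by-step control given by the $\R$-filtration, and then preserve unboundedness through the disjointification that turns $\{y_n\}$ into an antichain.
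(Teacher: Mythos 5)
Your proposal is correct, but it is organized quite differently from the paper's proof, so a comparison is worthwhile. Both arguments pivot on the same two observations: a proper $\R$-filtration forces $f$ to be unbounded, and each Boolean operation costs at most one filtration step, so that $F(x \vee y) \leq \max(F(x), F(y)) + 1$ and hence your $I = \{x : F(x) < \infty\}$ is a proper ideal. The paper, however, never forms the quotient $\mathcal{A}/I$ and never splits into cases. It works directly with $\mathcal{L} = \{a : f(\downarrow a) \text{ unbounded}\}$ (the complement of your $I$) and runs a single induction: maintain a decreasing chain $c_0 \geq c_1 \geq \cdots$ in $\mathcal{L}$; at each stage pick $a'_{n+1} \leq c_{n+1}$ with $f(a'_{n+1}) \geq \max(f(a_n), f(c_{n+1})) + 2$; the difference estimate applied to $a'_{n+1} = c_{n+1} - (c_{n+1} - a'_{n+1})$ forces $f(c_{n+1} - a'_{n+1}) \geq f(a'_{n+1}) - 1 > f(a_n)$; then keep whichever of $a'_{n+1}$ and $c_{n+1} - a'_{n+1}$ lies in $\mathcal{L}$ as the next link of the chain, and let the \emph{other} piece be $a_{n+1}$. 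The decisive trick is that the discarded piece need not lie in $\mathcal{L}$ --- only its $f$-value matters --- so the atomic versus atomless behaviour of $\mathcal{A}/I$ never has to be distinguished. Your version makes exactly that distinction explicit: when $\mathcal{A}/I$ is infinite you invoke the (standard, but unproved in your sketch) fact that every infinite Boolean algebra contains an infinite antichain, and when it is finite you locate an atom $e_1$ mod $I$ with $F(e_1) = \infty$ and prove $F$ is unbounded on $I \cap \downarrow e_1$ via $a = e_1 - (e_1 - a)$ and the $+1$ cost of a single operation; this step is correct, and your quantitative bookkeeping ($F(y_{n+1}) > F(\bigvee_{m \leq n} y_m) + n + 1$, whence $F(c_n) \geq F(y_n) - 1$ and the values climb fast enough to pick $a_n \leq c_n$ with $f(a_n) > f(a_{n-1})$) goes through, as does the disjointification in both cases. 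What your route buys is structural transparency: it shows the lemma is really a statement about the quotient $\mathcal{A}/I$, with your atom case being a second application of the same peeling idea inside the ideal. What the paper's single induction buys is brevity and self-containedness --- no antichain fact, no quotient, and one uniform argument covering both of your cases.
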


\begin{proof}  Let $\mathcal{L} = \{a\in \mathcal{A}\mid f(\downarrow a) \text{ is unbounded in }\omega\}$ where $\downarrow a$ denotes the set of elements in $\mathcal{A}$ below $a$.  We know that $1\in \mathcal{L}$ and if $a\in \mathcal{L}$ and $a' \leq a$ then either $a'$ or $a - a'$ is in $\mathcal{L}$.  Let $c_0 = 1$ and select $a_0\in \mathcal{A}$ such that $c_1 = c_0 - a_0 \in \mathcal{L}$.  Suppose that we have selected disjoint $a_0, \ldots, a_n \in \mathcal{A}$ as well as decreasing $c_0, \ldots, c_{n+1}\in \mathcal{L}$ with $c_m = c_{m+1}\vee a_m$ and $c_{m+1}\wedge a_m = 0$ and $f(a_0) < f(a_1) \cdots < f(a_n)$.  Select $a_{n+1}'\leq c_{n+1}$ such that $f(a_{n+1}') \geq \max(\{f(a_n), f(c_{n+1})\}) + 2$.  Notice that $f(c_{n+1}) + 2 \leq f(a_{n+1}') \leq \max(\{f(c_{n+1}), f(c_{n+1} - a_{n+1}')\}) +1$, and so $f(c_{n+1} - a_{n+1}') +1 \geq f(a_{n+1}')$ and $f(c_{n+1} - a_{n+1}')\geq f(a_n) +1$.  Thus $f(a_{n+1}'), f(c_{n+1} - a_{n+1}') > f(a_n)$.  If $c_{n+1} - a_{n+1}'\in \mathcal{L}$ then let $a_{n + 1} = a_{n+1}'$ and $c_{n+2} = c_{n+1} - a_{n+1}'$, else $a_{n+1}'\in \mathcal{L}$ and we let $c_{n + 2} = a_{n+1}'$ and $a_{n + 1} = c_{n + 1} - a_{n+1}'$.  Now it is clear that the produced sequence $\{a_n\}_{n\in \omega}$ consists of disjoint elements and $f(a_0) < f(a_1) < \cdots$.
\end{proof}

For the following, cf. \cite[Lemma 3]{CP}:

\begin{lemma}\label{ruiningsequences}  If $\cof(LM) = \aleph_1$ then for every countably infinite Boolean algebra $\mathcal{A}$ there exists a family of sequences $\{a_n^{\zeta}\}_{n\in \omega, \zeta< \aleph_1}$ in $\mathcal{A}$ such that

\begin{enumerate}  \item $a_n^{\zeta} \wedge a_m^{\zeta} = 0$ whenever $\zeta < \aleph_1$ and $n\neq m$; and

\item for every proper $\R$-filtration $f$ of $\mathcal{A}$ there exists $\zeta< \aleph_1$ for which $f(a_n^{\zeta}) > n$ for all $n\in \omega$.

\end{enumerate}
\end{lemma}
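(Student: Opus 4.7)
The plan is to translate each proper $\R$-filtration $f$ of $\mathcal{A}$ into a function $\phi_f\in\omega^\omega$ capturing the speed at which new $f$-values appear in the finite subalgebras of $\mathcal{A}$, to cover $\omega^\omega$ with an $\aleph_1$-indexed family $\{T_\zeta\}_{\zeta<\aleph_1}\subseteq\CH$ supplied by the assumption $\cof(LM)=\aleph_1$, and to pre-build the sequences $\{a_n^\zeta\}$ from the dominating functions of Lemma \ref{dominatewell} so that $\phi_f\in T_\zeta$ forces $f(a_n^\zeta)>n$ for every $n$.

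Using the minimum characterisation of $\cof(LM)$ stated just before Lemma \ref{dominatewell}, I would fix $\{T_\zeta\}_{\zeta<\aleph_1}\subseteq\CH$ with $\bigcup_{\zeta<\aleph_1}T_\zeta=\omega^{\omega}$ and for each $\zeta$ apply Lemma \ref{dominatewell} to obtain a strictly increasing $g_\zeta\in\omega^\omega$ that strongly dominates every strictly increasing element of $T_\zeta$. Enumerate $\mathcal{A}=\{b_i\}_{i\in\omega}$ with $b_0=1$, let $\mathcal{A}_k$ be the finite Boolean subalgebra of $\mathcal{A}$ generated by $b_0,\dots,b_k$, and after a harmless reindexing assume $\mathcal{A}_k\subsetneq\mathcal{A}_{k+1}$ for all $k$. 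For each $\zeta$ I then construct $\{a_n^\zeta\}$ by choosing inductively a strictly descending chain of nonzero elements $d_0^\zeta>d_1^\zeta>\cdots$, where $d_n^\zeta$ is an atom of $\mathcal{A}_{g_\zeta(n)}$ and $d_{n+1}^\zeta$ is an atom of the refinement $\mathcal{A}_{g_\zeta(n+1)}$ lying strictly inside $d_n^\zeta$, and I set $a_n^\zeta=d_n^\zeta-d_{n+1}^\zeta$. Pairwise disjointness is immediate, since for $n<m$ one has $a_m^\zeta\le d_{n+1}^\zeta$ while $a_n^\zeta\le(d_{n+1}^\zeta)^c$; if $\mathcal{A}$ has atoms that would block the chain, one first passes to a suitable atomless countable subalgebra.

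Given a proper $\R$-filtration $f$, I attach $\phi_f\in\omega^\omega$ by letting $\phi_f(n)$ be a strictification of ``the least $k$ with $\mathcal{A}_k\not\subseteq Z_n$''; this is well defined and tends to infinity because Lemma \ref{antichain} (combined with $\bigcup_k\mathcal{A}_k=\mathcal{A}$) forces the $f$-values to be unbounded. By the covering there exists $\zeta$ with $\phi_f\in T_\zeta$, and Lemma \ref{dominatewell} then yields $g_\zeta(n)>\phi_f(n)$ together with the two-step gap $g_\zeta(n+1)>\phi_f(m+2)$ whenever $g_\zeta(n)\ge\phi_f(m)$. The conclusion $f(a_n^\zeta)>n$ is obtained by contradiction: if $a_n^\zeta\in Z_n$ for some $n$, then since $|T_{\zeta,n}|\le n+1$ the position of $\mathcal{A}_{\phi_f(n)}$ inside $\mathcal{A}_{g_\zeta(n)}$ ranges over at most $n+1$ alternatives, and combining $a_n^\zeta$, $d_n^\zeta$, $d_{n+1}^\zeta$ with the generators of $\mathcal{A}_{\phi_f(n)}$ (all in $Z_n$) via a bounded number of applications of $\R(Z_m)\subseteq Z_{m+1}$ forces the whole finite subalgebra $\mathcal{A}_{g_\zeta(n+1)}$ into $Z_{n+O(1)}$, contradicting the two-step gap condition.

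The principal obstacle in the plan is exactly this closing combinatorial step: one must calibrate the descending chain $\{d_n^\zeta\}$ and the precise definition of $\phi_f$ so that the two-step gap provided by Lemma \ref{dominatewell} is strong enough to rule out $a_n^\zeta\in Z_n$. This is the point where the proof must adapt the argument of \cite[Lemma 3]{CP}, which treats the easier case of a chain of subalgebras, to the setting of $\R$-filtrations, using the Boolean closure rule $\R(Z_m)\subseteq Z_{m+1}$ in place of the subalgebra property and the two-step gap of Lemma \ref{dominatewell} as the quantitative substitute.
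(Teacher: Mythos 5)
There is a genuine gap, and it sits exactly where you flagged ``the principal obstacle'': your closing step is not merely uncalibrated, it cannot be repaired within the shape of your argument. Two local errors first: by your own definition of $\phi_f(n)$ as (a strictification of) the least $k$ with $\mathcal{A}_k\not\subseteq Z_n$, the generators of $\mathcal{A}_{\phi_f(n)}$ are \emph{not} all known to lie in $Z_n$; and even when a generating set does lie in $Z_n$, the number of applications of $\R(Z_m)\subseteq Z_{m+1}$ needed to produce the subalgebra it generates grows with the number of generators (each application of $\R$ at most squares the size of the set, while the generated algebra can have size doubly exponential in the number of generators), so you get $\mathcal{A}_{g_\zeta(n+1)}\subseteq Z_{n+O(n)}$ at best, which contradicts nothing. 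The deeper, conceptual problem is that $\phi_f$ records only the \emph{rate} at which $f$ exhausts the chain of finite subalgebras, and continuum-many proper $\R$-filtrations share the same $\phi_f$ while assigning arbitrarily small values to any fixed countable set of elements. Concretely, for your fixed $\zeta$, a filtration by a chain of subalgebras $Z_n\supseteq\langle\{a_m^{\zeta}\}_{m\in\omega}\cup\{b_0,\dots,b_n\}\rangle$ (when proper, which it generically is) satisfies $\R(Z_n)= Z_n\subseteq Z_{n+1}$ and gives $f(a_m^{\zeta})=0$ for all $m$; domination of the single real $\phi_f$ by $g_\zeta$ is simply too little information to exclude such adversaries, since it constrains \emph{where} the filtration grows but not \emph{which} elements are late.

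The paper's proof closes this hole with a second, essentially different use of $\cof(LM)=\aleph_1$, which your proposal omits entirely: after fixing $\theta_0,\theta_1$ by domination (this part matches your plan), the filtration $f$ itself, restricted block-wise to $A_{g_{\theta_1}(n)}\setminus A_{g_{\theta_1}(n-1)}$, is viewed as a point of the product space $X_{\theta_0,\theta_1}=\prod_{n}\bigl(g_{\theta_0}(g_{\theta_1}(n))\bigr)^{A_{g_{\theta_1}(n)}\setminus A_{g_{\theta_1}(n-1)}}$, which is covered by $\aleph_1$-many sets $S^{\eta}$ with $n$-th coordinate of cardinality at most $n+1$. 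Thus for a fixed triple $(\theta_0,\theta_1,\eta)$ there are at most $n+1$ candidate restrictions $f_0,\dots,f_n$ of $f$ on each block, and one can \emph{diagonalize against all of them simultaneously}: Lemma \ref{antichain} is used to produce, inside the blocks, antichains $b_{n,0},\dots,b_{n,2n}$ of length $2n+1$ with large, strictly increasing $f$-values (your use of Lemma \ref{antichain} merely to argue $\phi_f\to\infty$ discards precisely this content), and a pigeonhole induction over $i\leq n$ builds elements $d_n^i$ whose $f_j$-values are large for all $j\leq i$ at once; a final extraction $a_m=l_m-l_{m+1}$ disjointifies. The resulting family is indexed by the $\aleph_1$-many triples $(\theta_0,\theta_1,\eta)$ rather than by single covering indices, and it is the width bound $|S_n^{\eta}|\leq n+1$ of the $\CH$-slaloms, played against the length $2n+1$ of the antichains, that makes the pigeonhole work. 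Your first half (the covering of $\omega^{\omega}$, Lemma \ref{dominatewell}, the chain of finite subalgebras, differences along a descending chain) is consistent with the paper's scaffolding, but without this localization of $f$ itself the lemma is out of reach of a pure domination argument.
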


\begin{proof}  Since $\mathcal{A}$ is countably infinite, and finitely generated Boolean algebras are finite, we can write $\mathcal{A}$ as the union of a strictly increasing chain $A_0 \subsetneq A_1\subsetneq \cdots$ of finite Boolean subalgebras.  By $\cof(LM) = \aleph_1$ we select a subset $\{T_{\theta}\}_{\theta< \aleph_1} \subseteq \CH$ such that $\omega^{\omega} = \bigcup_{\theta<\aleph_1} T_{\theta}$.  For each $T_{\theta}$ select a function $g_{\theta}$ as in Lemma \ref{dominatewell}.

We notice that if $f$ is a proper $\R$-filtration of $\mathcal{A}$ there exist $\theta_0, \theta_1< \aleph_1$ such that both of the following hold for all $n\in \omega$:

\begin{enumerate}[(a)] \item $g_{\theta_0}(n)> f(b) $ for every $b\in A_n$;

\item there is an antichain $b_{n, 0}, \ldots, b_{n, 2n} \in A_{g_{\theta_1}(n +1)}$ such that $$g_{\theta_0}(g_{\theta_1}(n)) + 4(n+1) + 1 < f(b_{n, 0})<\cdots < f(b_{n, 2n}).$$

\end{enumerate}
 
\noindent To see this we select a strictly increasing $h_0 \in \omega^{\omega}$ such that $f(b) <  h_0(n)$ for each $b\in A_n$.  Since $h_0\in \omega^{\omega} = \bigcup_{\theta<\aleph_1} T_{\theta}$, we select $\theta_0$ such that $h_0\in T_{\theta_0}$.  Then $g_{\theta_0}$ satisfies property (a) since $g_{\theta_0}(n) > h_0(n)$ for all $n\in \omega$.  Select a sequence $\{a_n\}_{n\in \omega}$ as in Lemma \ref{antichain}.  Define $h_1\in \omega^{\omega}$ by first letting $h_1(0) = 0$ and then selecting $a_{0, 0}$ in $\{a_n\}_{n\in \omega}$ for which $g_{\theta_0}(h_1(0)) + 5 < f(a_{0, 0})$.  Suppose we have already selected $h_1(0), \ldots, h_1(m)$ and $\{a_{r, i}\}_{0\leq r \leq m, 0\leq i\leq 2r}$ such that for each $0\leq r<m$ we have $$g_{\theta_0}(h_1(r)) + 4(r+1) +1< f(a_{r, 0}) < f(a_{r, 1}) < \cdots < f(a_{r, 2r})$$ and $a_{r, 0}, a_{r, 1}, \ldots, a_{r, 2r}\in A_{h_1(r+1)}$, and so that $$g_{\theta_0}(h_1(m)) + 4(m+1) + 1 < f(a_{m, 0}) < f(a_{m, 1}) < \cdots < f(a_{m, 2m}).$$  Select $h_1(m+1)$ such that $a_{m, 0}, \ldots, a_{m, 2m} \in A_{h_1(m+1)}$ and select further elements $a_{m+1, 0}, \ldots, a_{m+1, 2(m+1)}$ among $\{a_n\}_{n\in \omega}$ so that $$g_{\theta_0}(h_1(m+1)) + 4(m+2) + 1 < f(a_{m+1, 0})< \ldots < f(a_{m+1, 2(m+1)}).$$  Such an $h_1$ is obviously strictly increasing and $h_1\in T_{\theta_1}$ for some $\theta_1< \aleph_1$.

We know that for each $n\in \omega$ there exists a maximal $m_n\in \omega$ such that $h_1(m_n) \leq g_{\theta_1}(n)$ and certainly $m_n\geq n$; moreover by Lemma \ref{dominatewell} we know $m_{n+1} \geq m_n +2$.  We select the antichains $b_{n, 0}, \ldots, b_{n, 2(n+1)}$ for each $n\in \omega$ by letting $b_{n, i} = a_{m_n +1, i}$.  Thus (a) and (b) are both satisfied.

Formally setting $A_{g_{\theta_1}(-1)} = \emptyset$ we know that the proper $\R$-filtration $f$ is an element of the set

\begin{center}  $X_{\theta_0, \theta_1} = \prod_{n\in \omega} (g_{\theta_0}(g_{\theta_1}(n))^{A_{g_{\theta_1}(n)} \setminus A_{g_{\theta_1}(n-1)}}$.

\end{center}

\noindent Each set $\omega^{A_{g_{\theta_1}(n)} \setminus A_{g_{\theta_1}(n-1)}}$ is countably infinite and can therefore be bijected with $\omega$.  This bijection extends to a bijection of $\omega^{\omega}$ with $\prod_{n\in \omega}\omega^{A_{g_{\theta_1}(n)} \setminus A_{g_{\theta_1}(n-1)}}$.  Since $\cof(LM) = \aleph_1$ there exists a covering of cardinality $\aleph_1$ of $\prod_{n\in \omega}\omega^{A_{g_{\theta_1}(n)} \setminus A_{g_{\theta_1}(n-1)}}$ by sets of form $S^{\eta} = \prod_{n\in \omega}S_n^{\eta}$ where $S_n^{\eta}\in [\omega^{A_{g_{\theta_1}(n)} \setminus A_{g_{\theta_1}(n-1)}}]^{\leq n+1}$.  Let $J_{\theta_0, \theta_1, \eta}$ denote the set of all proper $\R$-filtrations of $\mathcal{A}$ which are in $S^{\eta} \cap X_{\theta_0, \theta_1}$ and satisfy (a) and (b) for the parameters $\theta_0, \theta_1$. 

There are only $\aleph_1$-many choices for $\theta_0, \theta_1$ and each $X_{\theta_0, \theta_1}$ can be covered by $\aleph_1$-many sets $S^{\eta}$.  Therefore it is now sufficient to construct a sequence $\{a_n\}_{n\in \omega}$ for which 

\begin{enumerate}

\item $a_n\wedge a_m = 0$ whenever $n\neq m$; and

\item $f(a_n)>n$ for any $f\in J_{\theta_0, \theta_1, \eta}$.

\end{enumerate}

Let $\{f_i:A_{g_{\theta_1}(n)} \setminus A_{g_{\theta_1}(n-1)} \rightarrow g_{\theta_1}(g_{\theta_0}(n))\}_{0\leq i\leq n}$ be the set of restrictions $f\upharpoonright A_{g_{\theta_1}(n)} \setminus A_{g_{\theta_1}(n-1)}$ where $f\in J_{\theta_0, \theta_1, \eta}$.  We inductively define a set of elements $\{d_n^i\}_{0 \leq i \leq n} \subseteq A_{g_{\theta_1}(n)}$ such that for all $j\leq i$ we have $f_j(d_n^i) > g_{\theta_0}(g_{\theta_1}(n - 1)) + 4n - 2i$.

For $i = 0$ we can select by (b) an element $d_n^0\in A_{g_{\theta_1}(n)}$ for which $f_0(d_n^0) > g_{\theta_0}(g_{\theta_1}(n - 1)) + 4n +1$.  Suppose that we have selected $d_n^i \in A_{g_{\theta_1}(n)}$, where $i<n$, such that for all $0 \leq j\leq i$ we have $f_j(d_n^i) > g_{\theta_0}(g_{\theta_1}(n)) + 4n - 2i$.  If also $$f_{i+1}(d_n^i) > g_{\theta_0}(g_{\theta_1}(n - 1)) + 4n - 2(i + 1)$$ then we set $d_n^{i+1} = d_n^i$.  Else we have $$f_{i+1}(d_n^i) \leq  g_{\theta_0}(g_{\theta_1}(n - 1)) + 4n - 2(i + 1).$$  By (b) we select an antichain $b_{n - 1, 0}, \ldots, b_{n - 1, 2n} \in A_{g_{\theta_1}(n)}$ such that $$g_{\theta_0}(g_{\theta_1}(n - 1)) + 4n + 1 < f_{i+1}(b_{n - 1, 0})<\cdots < f_{i+1}(b_{n - 1, 2n}).$$  Notice that

\begin{center}
$\max(\{f_{i+1}(d_n^i\wedge b_{n-1, k}), f_{i+1}((d_n^i)^c\wedge b_{n-1, k})\}) \geq f_{i+1}(b_{n - 1, k}) - 1$
\end{center}

\noindent since otherwise $f_{i+1}(b_{n - 1, k}) = f_{i+1}((d_n^i\wedge b_{n-1, k}) \vee ((d_n^i)^c\wedge b_{n-1, k})) \leq f_{i+1}(b_{n - 1, k}) - 1$.  Thus we may select $d\in \{d_n^i, (d_n^i)^c\}$ for which $f_{i+1}(d \wedge b_{n-1, k}) \geq  f_{i+1}(b_{n - 1, k}) - 1$ for $n +1$ elements of $\{0, \ldots, 2n\}$ by the pigeon hole principle.  Let $K \subseteq \{0, \ldots, 2n\}$ denote the set of all $k$ for which $f_{i+1}(d \wedge b_{n-1, k}) \geq  f_{i+1}(b_{n - 1, k}) - 1$.  Letting $e_k = d \wedge b_{n-1, k}$ for each $k\in K$ we have $f_{i+1}(e_k)\geq f_{i+1}(b_{n - 1, k}) - 1 > g_{\theta_0}(g_{\theta_1}(n-1)) +4n$.  This means that for all $k\in K$ we have $$f_{i + 1}(d^c \vee e_k) \geq g_{\theta_0}(g_{\theta_1}(n - 1)) + 4n - 2(i + 1) +1$$ for otherwise we would have 

\begin{center}
$g_{\theta_0}(g_{\theta_1}(n-1)) + 4n < f_{i + 1}(e_k)$

$= f_{i+1}((d^c\vee e_k)\wedge d)$

$\leq g_{\theta_0}(g_{\theta_1}(n-1)) + 4n - 2(i+1) +2$

$\leq g_{\theta_0}(g_{\theta_1}(n-1)) + 4n$
\end{center}

\noindent Next we notice that for every $0\leq j\leq i$ there is at most one $k\in K$ for which $$f_j(d^c \vee e_{k}) \leq g_{\theta_0}(g_{\theta_1}(n -1)) + 4n - 2(i +1)$$ for if distinct $k, k' \in K$ satisfied this inequality we would have

\begin{center}  $g_{\theta_0}(g_{\theta_1}(n-1)) + 4n - 2(i+1) +1 \geq f_j((d^c \vee e_k) \wedge (d^c \vee e_{k'}))$

$=f_j(d^c)$

$> g_{\theta_0}(g_{\theta_1}(n-1)) + 4n -2i - 1$

\end{center}

\noindent which is absurd.  Thus by the pigeon hole principle, since $i<n$, there exists some $k\in K$ for which $f_j(d^c \vee e_k)> g_{\theta_0}(g_{\theta_1}(n-1)) + 4n- 2(i+1)$ for all $0\leq j\leq i+1$ and we let $d_n^{i+1} = d^c \vee e_k$.  The construction of the $d_n^i$ is now complete.

Letting $\{d_n\}_{n\geq 1}$ be given by $d_n = d_n^n$ we notice that for every $f\in J_{\theta_0, \theta_1, \eta}$ we have $f(d_n) = f(d_n^n) > g_{\theta_0}(g_{\theta_1}(n-1)) + 2n$ for each $n\geq 1$.  Thus letting $c_n = d_{n+1}$ we get for all $f\in J_{\theta_0, \theta_1, \eta}$

\begin{center}
$g_{\theta_0}(g_{\theta_1}(n+1))  > f(c_n) > g_{\theta_0}(g_{\theta_1}(n)) + 2(n +1)$
\end{center}

\noindent and

\begin{center}
$g_{\theta_0}(g_{\theta_1}(n+1))  \geq f(c_n^c) \geq g_{\theta_0}(g_{\theta_1}(n)) + 2(n+1)$
\end{center}

\noindent since $c_n\in A_{g_{\theta_1}(n+1)}$.

For each $n\in \omega$ let $c_n^0 = c_n$ and $c_n^1 = c_n^c$.  We define a sequence $n_0 < n_1< \cdots$ of natural numbers, a sequence $\sigma$ of $0$s and $1$s, as well as a sequence of subsets $\omega \supseteq Z_0 \supseteq Z_1\supseteq$.  Let $n_0 = 0$.  Notice that it is either the case that there are infinitely many $k$ for which

\begin{center}
$f(c_{n_0}\wedge c_k) \geq  f(c_k) - 1$ for all $f\in J_{\theta_0, \theta_1, \eta}$ 
\end{center}

\noindent or infinitely many $k$ for which 

\begin{center}
$f(c_{n_0}^c\wedge c_k) \geq f(c_k) - 1$ for all $f\in J_{\theta_0, \theta_1, \eta}.$
\end{center}

Select $\sigma(0)$ so that for infinitely many $k\in \omega$ we have $f(c_{n_0}^{\sigma(0)}\wedge c_k) \geq f(c_k) - 1$ for $f\in J_{\theta_0, \theta_1, \eta}$  and let

\begin{center}
$Z_0 = \{k> n_0\mid f(c_{n_0}^{\sigma(0)}\wedge c_k) \geq f(c_k) - 1\text { for all  }f\in J_{\theta_0, \theta_1, \eta}\}$
\end{center}

Let $n_1= \min(Z_0)$.  Select $\sigma(1)$ so that the set

\begin{center}
$Z_1 = \{k> n_1, k\in Z_0 \mid f(c_{n_0}^{\sigma(0)} \wedge c_{n_1}^{\sigma(1)}\wedge c_k) \geq f(c_k) - 2\text { for all } f\in J_{\theta_0, \theta_1, \eta}\}$
\end{center}

\noindent is infinite.  Continuing in this manner we construct a sequence $l_m = c_{n_0}^{\sigma(0)}\wedge \cdots \wedge c_{n_m}^{\sigma(m)}$ in $\mathcal{A}$ such that $f(l_m) \geq f(c_{n_m}) - m - 1$, $l_m \geq l_{m+1}$ and $l_m \in A_{g_{\theta_1}(n_m+1)}$.  Since  

\begin{center}
$f(l_{m}) \geq f(c_{n_m}) - m - 1$

$\geq f(c_{n_m}) - n_m -1$ 

$> g_{\theta_0}(g_{\theta_1}(n_m))$

$> f(l_{m-1})$
\end{center}

\noindent for all $f\in J_{\theta_0, \theta_1, \eta}$ we get that 

\begin{center}
$f(l_m - l_{m+1}) \geq f(l_{m+1}) - 1$

$> f(c_{n_{m+1}}) - m - 2$

$>  g_{\theta_0}(g_{\theta_1}(n_{m+1})) + 2(n_{m+1} +1) - m - 2$

$\geq g_{\theta_0}(g_{\theta_1}(n_{m+1})) + 2(m + 2) - m - 2$

$> m.$

\end{center}

\noindent Thus letting $a_m = l_m - l_{m+1}$ we are done.

\end{proof}

The construction for Proposition \ref{stronguncountable} now follows that used for \cite[Theorem 1]{CP} with almost no alteration.  For completeness we provide the construction and proof below.

\begin{proof}[Proof of Proposition \ref{stronguncountable}]  As $\cof(LM) = \aleph_1$ we have by Lemma \ref{dominatewell} a set $\{g_{\theta}\}_{\theta<\aleph_1}$ of strictly increasing functions $g_{\theta}: \omega \rightarrow \omega$ such that for each $f: \omega \rightarrow \omega$ there is some $\theta<\aleph_1$ for which $f(n)< g_{\theta}(n)$ for all $n\in \omega$.  Let $\{X_m\}_{m\in \omega}$ be a partition of $\omega$ into infinite pairwise disjoint sets.  For each $\theta<\aleph_1$ we let $g_{\theta}'(m) = \min(X_m \cap (g_{\theta}(m), \infty))$.  Given any sequence $\overline{a} = \{a_n\}_{n\in \omega}$ of pairwise disjoint subsets of $\omega$ we let $$(\overline{a})^m = \bigcup_{n\in X_m} a_n$$ and $$(\overline{a})^{\theta} = \bigcup_{m\in \omega} a_{g_{\theta}'(m)}.$$  Moreover we let $$F(\overline{a}) = \{(\overline{a})^m\mid m\in \omega\}\cup \{(\overline{a})^{\theta}\mid \theta<\aleph_1\}.$$

The Boolean algebra $\mathcal{A}$ will be constructed by induction over the ordinals less than $\aleph_1$.  Let $\mathcal{A}_0$ be a Boolean algebra on $\omega$ of cardinality $\aleph_1$.  Whenever $\epsilon<\aleph_1$ is a limit ordinal we let $\mathcal{A}_{\epsilon} = \bigcup_{\delta<\epsilon}\mathcal{A}_{\delta}$.  Construct $\mathcal{A}_{\delta + 1}$ from $\mathcal{A}_{\delta}$ by letting $\mathcal{A}_{\delta} = \{b_{\gamma}\}_{\gamma<\aleph_1}$ be an enumeration and for each $\omega \leq \alpha<\aleph_1$ let $\mathcal{A}_{\delta, \alpha}$ be the Boolean subalgebra generated by $\{b_{\gamma}\}_{\gamma< \alpha}$.  Since $\mathcal{A}_{\delta, \alpha}$ is countably infinite we select, by Lemma \ref{ruiningsequences}, sequences $\{\overline{a}^{\zeta, \alpha}\}_{\zeta<\aleph_1}$ such that $a_n^{\zeta, \alpha}\wedge a_m^{\zeta, \alpha} = 0$ when $m\neq n$ and for any proper $\R$-filtration $f$ of $\mathcal{A}_{\delta, \alpha}$ there exists $\zeta<\aleph_1$ for which $f(a_n^{\zeta, \alpha}) > n$ for all $n\in \omega$.  Let $\mathcal{A}_{\delta + 1}$ be the Boolean algebra generated by $\mathcal{A}_{\delta}\cup \bigcup_{\omega \leq \alpha<\aleph_1, \zeta<\aleph_1} F(\overline{a}^{\zeta, \alpha})$.  Let $\mathcal{A} = \bigcup_{\delta<\aleph_1} \mathcal{A}_{\delta}$.

We check that $\mathcal{A}$ is as required.  Certainly the cardinality of $\mathcal{A}$ is correct.  To see that $\mathcal{A}$ is of strong uncountable cofinality we suppose for contradiction that $f: \mathcal{A} \rightarrow \omega$ is a proper $\R$-filtration.  Select elements $b_n\in \mathcal{A}$ such that $f(b_n) > n$.  Then $\{b_n\}_{n\in \omega} \subseteq \mathcal{A}_{\delta}$ for some $\delta<\aleph_1$, and therefore $\{b_n\}_{n\in \omega} \subseteq \mathcal{A}_{\delta, \alpha}$ for some $\alpha<\aleph_1$.  The restriction $f\upharpoonright \mathcal{A}_{\delta, \alpha}$ is therefore a proper $\R$-filtration of $\mathcal{A}_{\delta, \alpha}$.  Letting $\{\overline{a}^{\zeta, \alpha}\}_{\zeta<\aleph_1}$ be the sequence selected for $\mathcal{A}_{\delta, \alpha}$, we know for some $\zeta<\aleph_1$ that $f(a_n^{\zeta, \alpha}) > n$ for all $n\in \omega$.

Now $F(\overline{a}^{\zeta, \alpha}) \subseteq \mathcal{A}_{\delta + 1} \subseteq \mathcal{A}$.  Select $\theta <\aleph_1$ for which $f((\overline{a}^{\zeta, \alpha})^m) + m + 1 < g_{\theta}(m)$ for all $m\in \omega$.  Now $f((\overline{a}^{\zeta, \alpha})^{\theta}) = m$ for some $m\in \omega$.  We notice that $(\overline{a}^{\zeta, \alpha})^{\theta} \cap (\overline{a}^{\zeta, \alpha})^m = a_{g_{\theta}'(m)}$, whence

\begin{center}
$g_{\theta}'(m) < f(a_{g_{\theta}'(m)})$

$\leq \max(\{m, f((\overline{a}^{\zeta, \alpha})^m)\}) + 1$

$< g_{\theta}(m)$

$< g_{\theta}'(m)$
\end{center}

\noindent which is a contradiction.
\end{proof}

\end{section}

\end{document}